\newtheorem{thm}{Theorem}[section]
\newtheorem{conj}[thm]{Conjecture}
\newtheorem{cor}[thm]{Corollary}
\newtheorem{defin}[thm]{Definition}
\newtheorem{lemma}[thm]{Lemma}
\newtheorem{prop}[thm]{Proposition}
\newcommand{\bdd}{\mbox{$\partial$}}
\newcommand{\aaa}{\mbox{$\alpha$}}
\newcommand{\bbb}{\mbox{$\beta$}}
\newcommand{\sss}{\mbox{$\sigma$}}
\newcommand{\cW}{\mbox{$\mathcal{W}$}}
\newcommand{\cV}{\mbox{$\mathcal{V}$}}
\newcommand{\oV}{\mbox{$\overline{V}$}}
\newcommand{\oW}{\mbox{$\overline{W}$}}
\newcommand{\cWp}{\mbox{$\mathcal{W_+}$}}
\newcommand{\cVp}{\mbox{$\mathcal{V_+}$}}
\newcommand{\cWm}{\mbox{$\mathcal{W_-}$}}
\newcommand{\cVm}{\mbox{$\mathcal{V_-}$}}
\newcommand{\cv}{\mbox{$\nu$}}
\newcommand{\cw}{\mbox{$\omega$}}
\newcommand{\cF}{\mbox{$\mathcal{F}$}}
\newcommand{\cT}{\mbox{$\mathcal{T}$}}
\newcommand{\No}{\mbox{$\mathbb{N}$}}
\newcommand{\Np}{\mbox{$\mathbb{N} \cup \{0 \}$}}
\newcommand{\hP}{\mbox{$\hat{P}$}}
\newcommand{\hF}{\mbox{$\hat{F}$}}
\newcommand{\hv}{\mbox{$\hat{v}$}}
\newcommand{\hw}{\mbox{$\hat{w}$}}
\newcommand{\hcv}{\mbox{$\hat{\cv}$}}
\newcommand{\hcw}{\mbox{$\hat{\cw}$}}
\begin{document}


\keywords {Heegaard splitting, Heegaard stabilization}

\thanks{Second author partially supported by an NSF grant.}

\title{A proof of the Gordon Conjecture}

\author{Ruifeng Qiu}
\address{\hskip-\parindent
Ruifeng Qiu\\
Department of Mathematics\\
Dalian University of Technology \\
Dalian 116024, China}
\email{Email: qiurf@dlut.edu.cn}

\author{Martin Scharlemann}
\address{\hskip-\parindent
Martin Scharlemann\\
Mathematics Department \\
University of California, Santa Barbara \\
Santa Barbara, CA 93117, USA}
\email{mgscharl@math.ucsb.edu}

\date{\today}

\begin{abstract}  A combinatorial proof of the Gordon Conjecture: The sum of two Heegaard splittings is stabilized if and only if one of the two summands is stabilized.
 \end{abstract}

\maketitle

\section{Introduction and basic background} 
 In 2004 the first author \cite{Q} presented a proof of the Gordon Conjecture, that the sum of two Heegaard splittings is stabilized if and only if one of the two summands is stabilized.  The same year, and a bit earlier, David Bachman \cite{Ba} presented a proof of a somewhat weaker version, in which it is assumed that the summand manifolds are both irreducible.  (A later version dropped that assumption.)  
 
The proofs in  \cite{Q} and \cite{Ba} are quite different.  The former is heavily combinatorial, essentially presenting an algorithm that will create, from a pair of stabilizing disks for the connected sum Heegaard splitting, an explicit pair of stabilizing disks for one of the summands. (Earlier partial results towards the conjecture, e. g. \cite{Ed} have been of this nature.)  In contrast, the proof in \cite{Ba} is a delicate existence proof, based on analyzing possible sequences of weak reductions of the connected sum splitting.   Both proofs have been difficult for topologists to absorb.  The present manuscript arose from the second author's efforts, following a visit to Dalian in 2007, to simplify and clarify the ideas in \cite{Q}.  (During that visit, MingXing Zhang was very helpful in providing the groundwork for this simplified version.)  

The most important strategic change here is an emphasis on symmetry.  In  \cite{Q} the roles of the two stabilizing disks on opposite sides of the summed Heegaard surface are quite different.  Here symmetry between the sides is maintained for as long as possible.  (In fact until Propositon \ref{prop:asymmetry}.) This adds a bit of complexity to the argument, but also some major efficiencies.  

The figures in this manuscript are meant to be viewed in color; readers confused by figures in a black-and-white version may find it helpful to look at an electronic version.

\bigskip

Since the argument easily extends to Heegaard splittings of bounded manifolds, for convenience we restrict to closed $3$-manifolds.  

A {\em Heegaard splitting} of a closed orientable $3$-manifold $M$ is a description of $M$ as the union of two handlebodies along their homeomorphic boundary.  That is $M = \cV \cup_S \cW$, where $\cV$ and $\cW$ are handlebodies and $S = \bdd \cV = \bdd \cW$.  The splitting is {\em stabilized} if there are properly embedded disks $V \subset \cV$ and $W \subset \cW$ so that $\bdd V \cap \bdd W$ is a single point in $S$.  

Suppose $M_+ = \cVp \cup_{S_+} \cWp$ and $M_- = \cVm \cup_{S_-} \cWm$ are two Heegaard split $3$-manifolds.  There is a natural way to obtain a Heegaard splitting $M = \cV \cup_S \cW$ for the connect sum $M = M_+ \# M_-$, where $S = S_+ \# S_-$:  Remove a $3$-ball $B^3_{\pm}$ from each of $M_{\pm}$, a ball that intersects $S_{\pm}$ in a single $2$-disk $D_{\pm}$.  Then attach $\bdd B^3_+$ to $\bdd B^3_-$ so that the disk $B_{\cV} = \bdd B^3_+ \cap V_+$ is identified to the disk $\bdd B^3_- \cap V_-$, the disk $B_{\cW} = \bdd B^3_+ \cap W_+$ is identified to the disk $\bdd B^3_- \cap W_-$ and so $\bdd D_+$ is identified to $\bdd D_-$ to create $S = S_+ \# S_-$.  This gives a Heegaard splitting $M = \cV \cup_S \cW$ with $\cV =  \cVp \natural_{B_{{\footnotesize \cV}}} \cVm$ and $\cW = \cWp \natural_{B_{{\footnotesize \cW}}} \cWm$.  See Figure \ref{fig:Overview2} (surfaces $P$ and $F$ to be explained later.)

    \begin{figure}[tbh]
    \centering
    \includegraphics[scale=0.6]{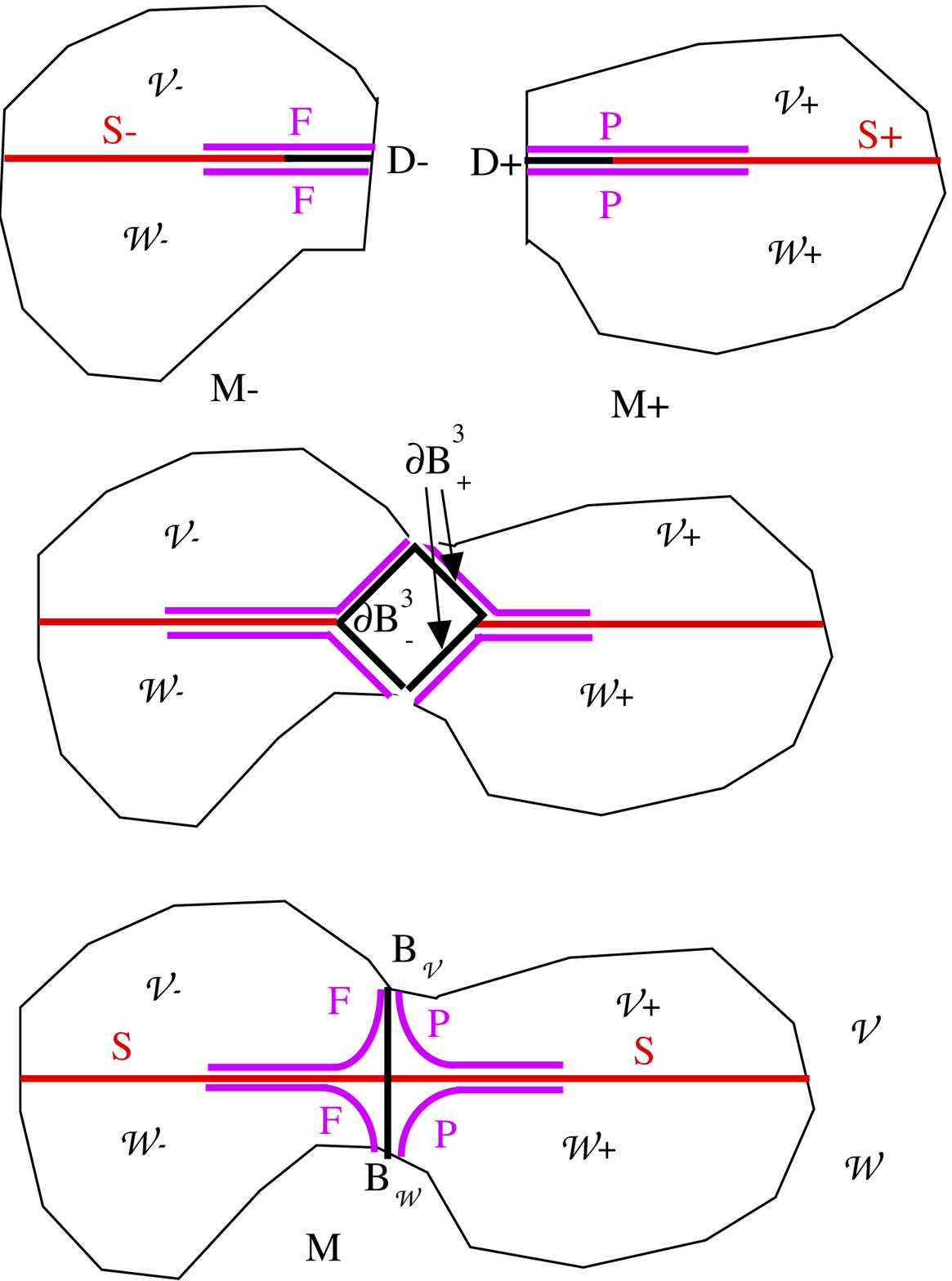}
    \caption{} \label{fig:Overview2}
    \end{figure}

In problem 3.91 of the Kirby problem \cite{Ki} list Gordon conjectured:

\begin{conj} $\cV \cup_S \cW$ is stabilized if and only if either $\cVp \cup_{S_+} \cWp$ or $\cVm \cup_{S_-} \cWm$ is stabilized.
\end{conj}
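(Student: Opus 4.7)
The \emph{if} direction is essentially immediate: a stabilizing pair $(V_\pm, W_\pm)$ for either summand can be isotoped to lie in the complement of the summing ball $B^3_\pm$, and so persists as a stabilizing pair for $\cV \cup_S \cW$.

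For the hard \emph{only if} direction, assume $\cV \cup_S \cW$ is stabilized by disks $V \subset \cV$, $W \subset \cW$ with $|\bdd V \cap \bdd W| = 1$, and let $P = B_\cV \cup B_\cW$ be the summing sphere, meeting $S$ in a single simple closed curve. The plan is to use $P$ as a guide to extract a stabilizing pair for one of the summands $M_\pm$. I would first put $V$ and $W$ in general position with $P$ and then apply standard innermost disk moves: a circle of $V \cap B_\cV$ innermost in $V$ bounds a subdisk of $V$ that allows one to surger $B_\cV$ inside the handlebody $\cV$ and replace $P$ by a summing sphere with strictly fewer intersections with $V$. Performing this symmetrically on the $\cW$ side reduces $V \cap P$ and $W \cap P$ to families of arcs.

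I would then induct on the total number of arc components $|V \cap P| + |W \cap P|$. When this number is zero, $\bdd V \cap \bdd W$ being a single point of $S$ forces both disks to lie in the same summand, and we are done. For the inductive step I would work with outermost arcs on both sides simultaneously: an outermost arc of $V \cap P$ in $V$ cuts off an ``ear'' disk $\Ddd_V \subset \cV$ lying in one summand, whose boundary is an arc in $P$ together with an arc in $\bdd V$; symmetrically one obtains $\Ddd_W \subset \cW$. Using $\Ddd_V$ and $\Ddd_W$ one hopes either to $\bdd$-compress $P$ to a strictly simpler summing sphere, or to replace $V$ and $W$ by simpler disks, all while preserving the stabilizing data.

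The main obstacle, and the reason the symmetric treatment must eventually break at Proposition \ref{prop:asymmetry}, is that the two ear disks $\Ddd_V$ and $\Ddd_W$ need not lie in the same summand, and even when they do, their boundary arcs on $S$ need not continue to meet in a single transverse point after the exchange. Carrying the symmetric reduction through therefore requires delicate combinatorial bookkeeping of the arc patterns $V \cap P \subset V$ and $W \cap P \subset W$ together with the location of the distinguished point $\bdd V \cap \bdd W$; I expect the real work to lie in showing that at each stage at least one symmetric reduction is available, and that the first configuration from which no symmetric move succeeds is precisely the one where committing to a single summand and performing one asymmetric surgery finishes the argument.
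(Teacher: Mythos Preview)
Your outline correctly identifies the naive strategy and, to your credit, the exact place where it breaks down; but what you have written is not a proof, and the gap you flag is the whole difficulty of the conjecture. Outermost-arc/ear-disk moves do not in general produce a new pair of disks in the summands whose boundaries meet in a single point: when you replace $V$ (or $W$) by an ear disk $\Delta_V$, or $\partial$-compress the summing sphere, you lose control of $|\partial V \cap \partial W|$, and there is no induction hypothesis to recover. The sentence ``I expect the real work to lie in showing that at each stage at least one symmetric reduction is available'' is precisely the Gordon Conjecture restated; nothing in your proposal suggests why such a reduction should exist.

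The paper does something structurally different. It never tries to reduce $|V\cap P|+|W\cap P|$ by isotopy or compression of the original disks. Instead it cuts $V$ and $W$ along the summing disk into \emph{all} their pieces at once, organizes the pieces as coherently numbered forests $\overline V,\overline W$ of disks in $M_+\sqcup M_-$, and introduces auxiliary data (pairs of ``op-arcs'' with Separation, Parallelism, Ordering and Disk Properties) that record how pieces on the two sides were glued. Two intersection pairings $\rho_\pm$ and $\sigma_\pm$ are defined, and the forests are required to be \emph{coordinated}: $\sigma_+=\sigma_-$. The inductive step (the ``fundamental construction'', Proposition~\ref{prop:fundamental}) is not an outermost-arc move but a band/tube-sum of many disks along a carefully chosen pair $(V_i,W_j)$ with $(i,j)$ peripheral; the delicate content is that this operation preserves the Disk Property (Corollary~\ref{cor:Stilldisk}) and coordination (Lemma~\ref{lemma:ground}), while strictly decreasing the number of disks in each forest. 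Proposition~\ref{prop:asymmetry} is the observation that if one always chooses $i$ maximal, the new op-arcs created in $\nu'$ stay disjoint from the $w$-arcs, so the hypotheses regenerate. Your proposal contains none of this machinery, and the ear-disk picture gives no hint of the op-arc bookkeeping or of why something like the Disk Property should be invariant.
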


One direction of implication is obvious: a pair of stabilizing disks in $\cVp \cup_{S_+} \cWp$ or $\cVm \cup_{S_-} \cWm$ becomes a pair of stabilizing disks in $\cV \cup_S \cW$.  The interest is in the opposite direction.  

\section{The framework part 1: Rooted forests of disks in handlebodies}  

\begin{defin}  A {\em rooted tree} is a tree with a distinguished vertex called the root.  A {\em coherent numbering} of the vertices of a rooted tree is a numerical labelling of the vertices $\aaa_i, i \in \Np$ that increases along paths  that move away  from the root.  That is, if the path in the tree from the root to the vertex $\aaa_i$ passes through the vertex $\aaa_k$ (or if $\aaa_k$ is the root) then $k < i$.   

A {\em rooted forest} is a collection of rooted trees, one of which contains a distinguished root $\aaa_0$.  A {\em coherent numbering} of the vertices of a rooted forest is a numerical labelling of the vertices $\aaa_i, i \in \Np$ which restricts to a coherent numbering in each of the rooted trees.  
\end{defin}

Given an arbitrary forest with a distinguished root, it is easy to assign a coherent numbering: imagine the forest as a real forest in a hilly region with the distinguished root the lowest of all roots and the branches of all trees in the forest ascending upward.  Take a generic height function on the forest and assign numbers to each vertex in order of their height.  Feel free to skip some numbers; there is no requirement that the set of numbers assigned to vertices is contiguous in $\Np$.  Numbers that are assigned to vertices will be called {\em active} numbers.  

\bigskip

\noindent {\bf Examples:} A rooted tree with coherent numbering is clearly also a rooted forest  with coherent numbering.  Delete a vertex (other than the root) from a coherently numbered rooted tree and also delete all contiguous edges.  The result is a coherently numbered rooted forest $\cF$, with as many components as the valence of the vertex that is removed.  The root of each component of $\cF$ that does not contain the original root (now the distinguished root) is the vertex that was closest to the root in the original tree.  More generally, if $\cF$ is a coherently numbered rooted forest, and a vertex other than the distinguished root is removed, along with all contiguous edges, then the result is still a coherently numbered rooted forest, but with the number assigned to the vertex that has been removed now inactive.  

\begin{defin}   \label{defin:forests} Suppose $\cVp, \cVm$ is a pair of disjoint handlebodies, and $P \subset \bdd \cVp, F \subset \bdd \cVm$ are subsurfaces of their respective boundaries.  A {\em forest of disks}  (modeled on the rooted forest $\cF$) in the pair of handlebodies $\cVp, \cVm$ is a properly embedded collection of disks $\oV = \{ V_i \}$, one for each vertex $\aaa_i$ of $\cF$ so that:

\begin{enumerate}  
\item The disks alternate between lying in $\cVp$ and $\cVm$.  That is, suppose vertices $\aaa_i, \aaa_k$ are incident to the same edge in $\cF$.  Then $V_i \subset \cVp$ if and only if $V_k \subset \cVm$.

\item Suppose $\aaa_i$ is a vertex of $\cF$ and $V_i \subset \cVp$ (resp $V_i \subset \cVm$) is the corresponding disk.  If $\aaa_i$ is not a root, or is the distinguished root $\aaa_0$, there is a one-to-one correspondence between the edges of $\cF$ incident to $\aaa_i$ and arcs of $\bdd V_i \cap P$ (resp $\bdd V_i \cap F$).  If $\aaa_i$ is a non-distinguished root then there is one extra arc of $\bdd V_i \cap P$ (resp $\bdd V_i \cap F$) called the {\em root arc}.

\item Corresponding to each root arc in $\bdd V_i \cap P$ (resp $\bdd V_i \cap F$) there is a normally oriented pair of properly embedded arcs in $F$ (resp $P$) called {\em overpass arcs} (abbreviated {\em op-arcs}).  The op-arcs are all disjoint, both from each other and from $\bdd \oV$.  The collection of all op-arcs will be denoted $\cv$.  

The pairs of op-arcs will be required to have certain properties, which will be discussed below (see the end of Section \ref{sec:forests2} and Section \ref{sec:oparcs}).   
\end{enumerate}
\end{defin}

\noindent {\bf Seminal Example:}  Suppose the handlebody $\cV$ is expressed as the $\bdd$-connected sum of two handlebodies $\cVp$ and $\cVm$ along a disk $D$.  That is $\cV = \cVp \natural_D \cVm$. Consider a $\bdd$-reducing disk $V$ in $\cV$ and a distinguished point $x_0 \in \bdd V$.  It's easy to isotope $V$ rel $\bdd V$ so it intersects $D$ only in arcs.  Then the components of $V - \eta(D)$ are disks.

Here is a natural description of a tree $\cT$ embedded in the disk $V$:  For vertices, choose a point in the interior of each disk component of $V - D$.  For edges, choose, for each arc of $V \cap D$, an arc connecting the two vertices in the components of $V - D$ incident to that arc. Define the root of $\cT$ to be the vertex $\aaa_0$ that lies in the component of $V - D$ that has $x_0$ in its boundary.  Then the components of $V - \eta(D)$ constitute a tree of disks in $\cVp \cup \cVm$, modeled on $\cT$, with $P$ the copy of $D$ in $\bdd \cVp$ and $F$ the copy of $D$ in $\bdd \cVm$.  Since the only root is the distinguished root, there are no op-arcs.

    \begin{figure}[tbh]
    \centering
    \includegraphics[scale=0.6]{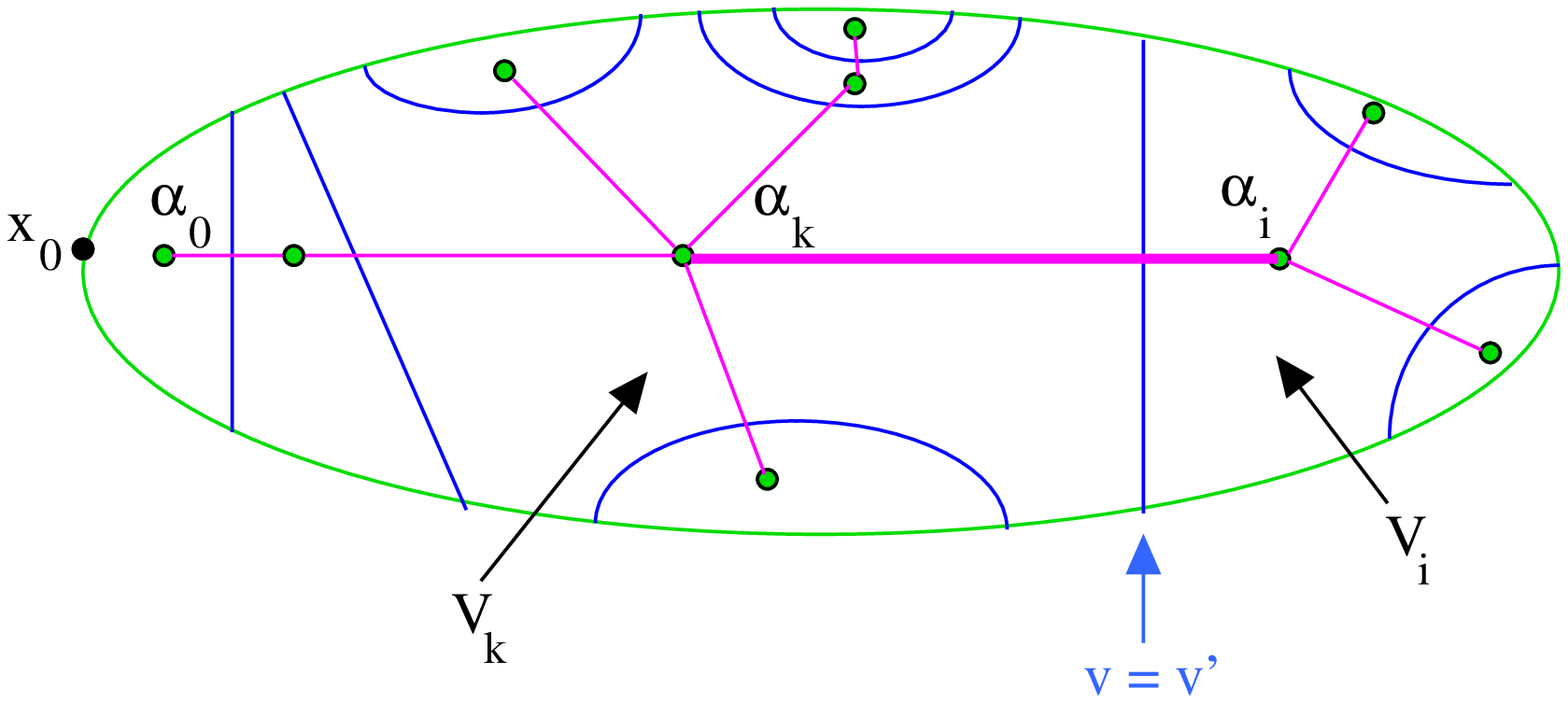}
    \caption{} \label{fig:Tree}
    \end{figure}

\bigskip

This example, though seminal to our discussion, is deceptive in two ways:  First, in this example the surfaces $P \subset \bdd \cVp$ and $F \subset \bdd \cVm$ are simply two sides of the same surface (namely $D$) and so can be naturally identified.  In general this will not be true.  Second, and most deceptively, an edge in the tree $\cT$ between two vertices, say $\aaa_i$ (representing $V_i \subset \cVm$) and $\aaa_k$ (representing $V_k \subset \cVp$)  corresponds, as required, both to a component $v$ of $\bdd V_i \cap F$ and $v'$ of $\bdd V_k \cap P$.  But what is true here and will not be true in general, is that both $v$ and $v'$ can be thought of as {\em the same} arc, namely a single component of $V \cap D$.  In general (only in part because there will be no natural identification of $P$ and $F$) the two arcs $v \subset \bdd V_i$ and $v' \subset \bdd V_k$ determined by a single edge in $\cT$ will, at least {\it prima facie}, have nothing to do with each other.  

\bigskip

\noindent {\bf Labeling convention:}   There is an efficient way to label the properly embedded arcs in $F$ and $P$ that come from a forest $\oV$ of disks in $\cVp, \cVm$ that is modeled on a coherently numbered forest $\cF$.  

First note that there is a natural way to assign a unique label to each edge in the forest $\cF$, namely give each edge the label of the vertex at its end that is most distant from the root.  That is, if the edge in $\cF$ has ends at vertices $\aaa_i$ and $\aaa_k$, with $\aaa_k$ either closer to the root or perhaps the root itself, so $k < i$, then label the edge $e_i$.  

As discussed in the example above, each edge $e_i$ in $\cF$ actually represents two arcs since $e_i$ is incident to two vertices $e_k$ and $e_i$ in $\cF$.  One arc is in $\bdd \oV \cap P \subset \bdd \cVp$ and the other is an arc in $\bdd \oV \cap F \subset \bdd \cVm$.  If, say, $V_i \subset \cVp$, so $V_k \subset \cVm$ then one end of $e_i$ corresponds, under Definition \ref{defin:forests}, to an arc of $\bdd V_i \cap P$, and the other end of $e_i$ corresponds to an arc of $\bdd V_k \cap F$.  It is natural to call these arcs $v_i^+$ and $v_i^-$ respectively, though it is perhaps counterintuitive that with this convention, $v_i^- \subset \bdd V_k$.   Symmetrically, if $V_i \subset \cVm$, so $V_k \subset \cVp$ then the arc of $\bdd V_i \cap F$ corresponding to the end of $e_i$ at $\aaa_i$ is called $v_i^-$, and the arc of $\bdd V_k \cap P$ corresponding to the end of $e_i$ at $\aaa_k$ is called $v_i^+$.

Now extend this labeling in the natural way to the root arcs and op-arcs:   
If $V_i \subset \cVp$ (resp $\cVm$) is a non-distinguished root, label the root arc $v_i^+ \subset P$ (resp $v_i^- \subset F$).  Label the corresponding {\em pair} of op-arcs in $F$ (resp $P$) by $v_i^-$ (resp $v_i^+$).  See Figure \ref{fig:Tree2} for how this labels arcs in the Seminal Example and Figure \ref{fig:Labeling2} for how the labelling may appear on $\bdd \cVp \cup \bdd \cVm$ in the more general case.  

    \begin{figure}[tbh]
    \centering
    \includegraphics[scale=0.6]{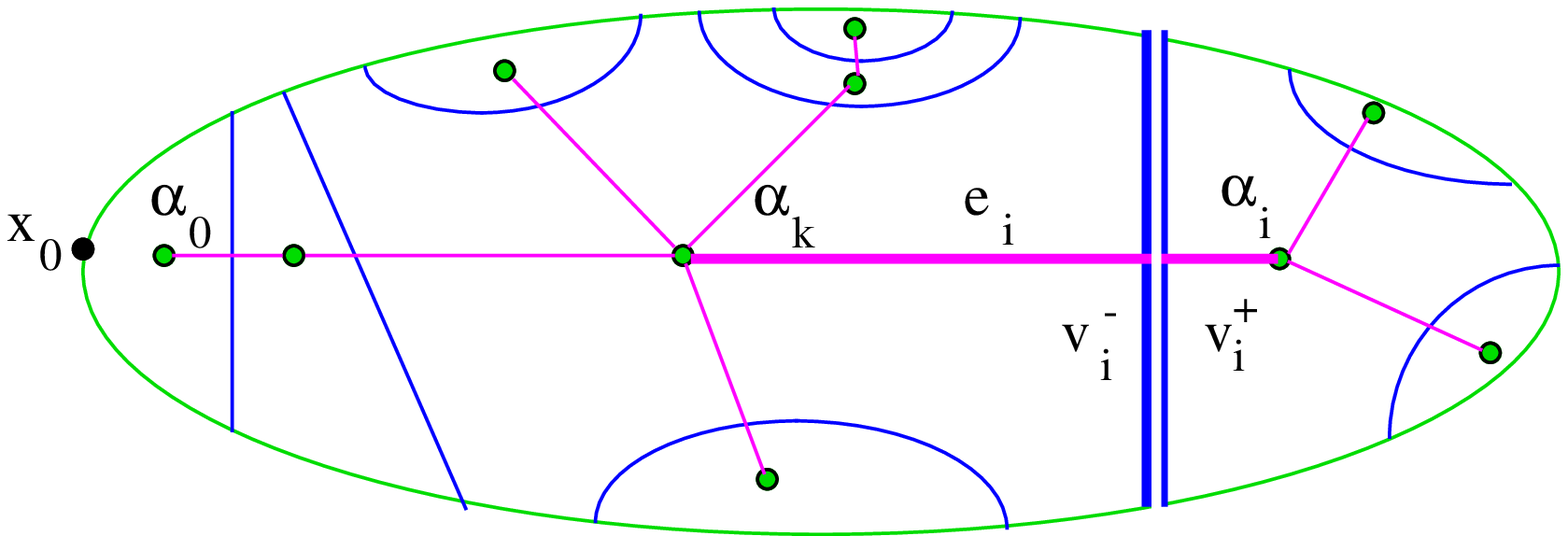}
    \caption{} \label{fig:Tree2}
    \end{figure}

    \begin{figure}[tbh]
    \centering
    \includegraphics[scale=0.6]{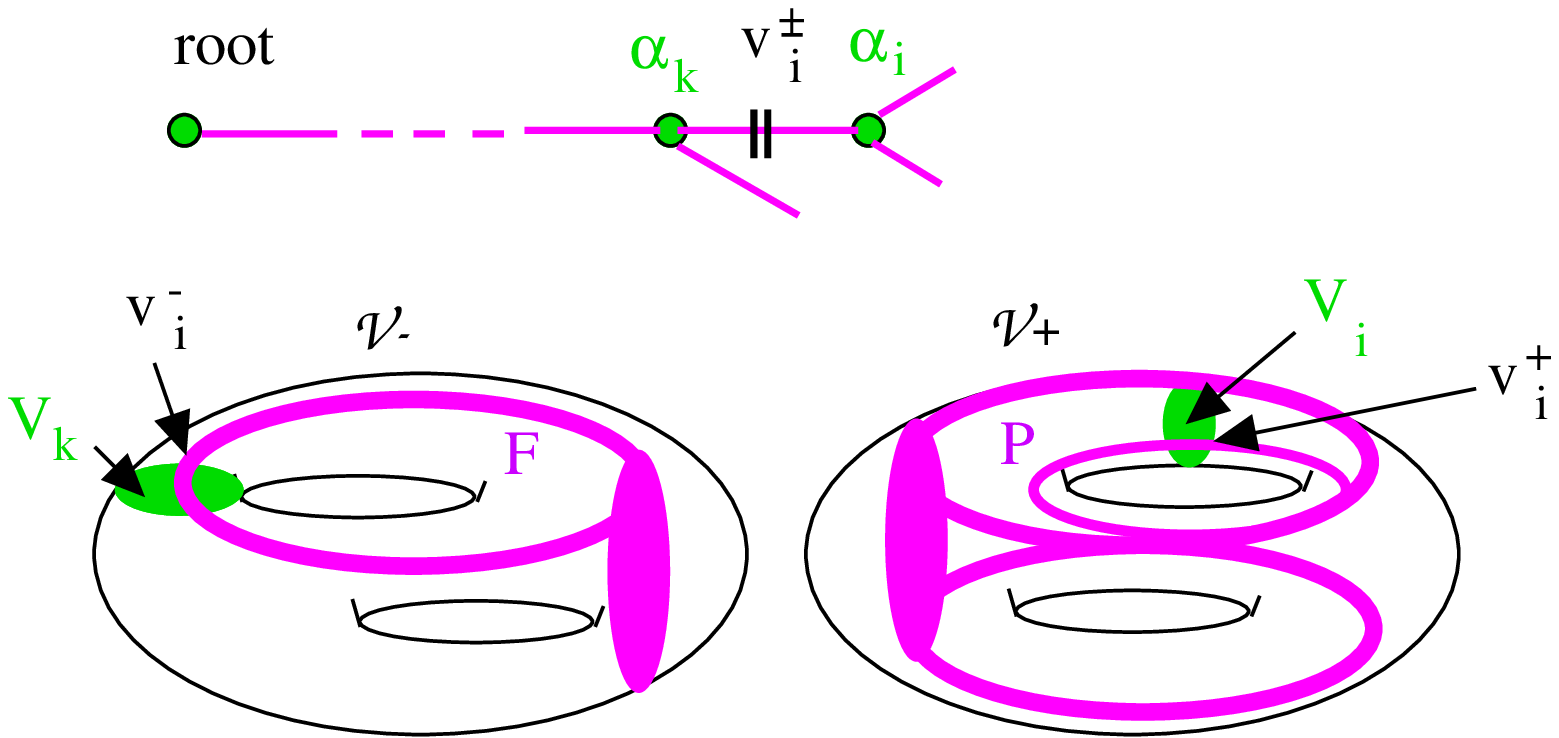}
    \caption{} \label{fig:Labeling2}
    \end{figure}
    
\section{The framework part 2: Stabilizing forests of disks in a Heegaard splitting}   \label{sec:forests2}

We now extend this construction to a pair of Heegaard split $3$-manifolds:

\begin{defin}   Suppose $M_+ = \cVp \cup_{S_+} \cWp$, $M_- = \cVm \cup_{S_-} \cWm$ are two closed orientable $3$-manifolds, and $P \subset S_+$ and $F \subset S_-$ are subsurfaces.  Suppose $\oV = \{ V_i \}$ and $\oW = \{ W_j \}$  together with associated op-arcs $ \cv, \cw$ are forests of disks in the pairs $\cVp, \cVm$ and $\cWp, \cWm$ respectively.  Let $\{ v_i^{\pm} \}, \{ w_j^{\pm} \}$ be the collections of arcs  labeled as described above. (Some of these are root arcs, some of them pairs of op-arcs; typically they are arcs in $(\bdd \oV \cup \bdd \oW) \cap P$ and $(\bdd \oV \cup \bdd \oW) \cap F$).  

Suppose that only a single point $x_0 \in \bdd \oV \cap \bdd \oW \subset S_+ \cup S_-$ lies outside of $F \cup P$, a point that lies in $\bdd V_0 \cap \bdd W_0$.  Suppose further that all op-arcs among the $\{ v_i^{\pm} \}$ (denoted $\cv$) are disjoint from all op-arcs among the $\{ w_j^{\pm} \}$ (denoted $\cw$).  Then $\oV, \oW$ together with associated op-arcs $ \cv, \cw$ is a {\em stabilizing pair} of forests of disks for $\cVp \cup_{S_+} \cWp$ and $\cVm \cup_{S_-} \cWm$.
\end{defin}

\noindent {\bf Seminal example:}  Suppose $M_+ = \cVp \cup_{S_+} \cWp$ and $M_- = \cVm \cup_{S_-} \cWm$ are two Heegaard split $3$-manifolds and $M = \cV \cup_S \cW$ is the connected sum splitting on $M = M_+ \# M_-$, where $S = S_+ \# S_-$.  Suppose that $\cV \cup_S \cW$ is a stabilized splitting.  Then there are disks $V \subset \cV$ and $W \subset \cW$ so that $\bdd V \cap \bdd W$ is a single point  $x_0 \in S$.  Following  the Seminal Example for handlebodies above, $V$ and $W$ give rise to rooted trees of disks $\oV$ and $\oW$ in the pairs $\cVp, \cVm$ and $\cWp, \cWm$ respectively.  These rooted trees, having no non-distinguished roots, also have no op-arcs $\cv$ or $\cw$. The original Heegaard splittings for $M_{\pm}$ are obtained from this picture of $\cVm \cup_{(S_- - B_-)} \cWm$ and $\cVp \cup_{(S_+ - B_+)} \cWp$ by identifying the disks $B_{{\footnotesize \cV}}$ with $B_{{\footnotesize \cW}}$ in both manifolds.  The resulting disk in $S_-$ we regard as $F$ and the resulting disk in $S_+$ we regard as $P$.  Except for $x_0$, all intersections between $\oV$ and $\oW$ lie where the disks $B_{\cV}$ and $B_{\cW}$ have been identified, namely in the disk $F \subset S_-$ and the disk $P \subset S_+$.  Hence $\oV$ and $\oW$ constitute a stabilizing pair of forests for the pair of Heegaard split manifolds $M_+ = \cVp \cup_{S_+} \cWp$ and $M_- = \cVm \cup_{S_-} \cWm$.

\bigskip

In this example, there is a clear connection between how the boundaries of the disks $\oV$ and $\oW$ intersect in $S_+$ and how they intersect in $S_-$.  Consider a pair of arcs $v_i^+$ and $w_j^+$ in the disk $P$, arcs isotoped rel their boundary points in $\bdd P$ to intersect minimally.  Then the arcs intersect (in precisely one point ) if and only if the pair of points $\bdd v_i^+$ separate the pair of points $\bdd w_j^+$ in the circle $\bdd P$.  Since, in this example, $v_i^+$ and  $v_i^-$ are copies of the same arc of $V \cap B_{\cV}$ (and, symmetrically,  $w_j^+$ and  $w_j^-$ are copies of the same arc of $W \cap B_{\cW}$), the pair of points $\bdd v_i^+$ separate the pair of points $\bdd w_j^+$ in the circle $\bdd P = \bdd D_+$ if and only if the pair of points $\bdd v_i^-$ separate the pair of points $\bdd w_j^-$ in the circle $\bdd F = \bdd D_-$.  To summarize, $|v_i^+ \cap w_j^+| = |v_i^- \cap w_j^-| \leq 1$.

The relation between $|v_i^+ \cap w_j^+|$ and $ |v_i^- \cap w_j^-|$ is more complicated in the general case.  To begin with, as mentioned above, the arcs $v_+ \subset P$ and $v_- \subset F$ may not have anything to do with each other.  Moreover, since $P$ (resp $F$) is an arbitrary subsurface of $S_+$ (resp $S_-$), two proper arcs, even when isotoped rel boundary to intersect minimally, may still intersect in a large number of points.   

Complicating things further, one of $v_i^{\pm}$ (or $w_j^{\pm}$) may represent a pair of op-arcs, about which so far we've said only this:  Each pair of op-arcs, say $v_i^+ = v_i^a \cup v_i^b \subset P$, is normally oriented, disjoint from all other arcs $v_k^+ \subset P$ and also disjoint from all pairs of op-arcs $w_j^+ \in \cw \subset P$.  We now introduce two properties which describe how such a pair of op-arcs $v_i^+$ is assumed to intersect the remaining arcs $\bdd \oW \cap P$, the arcs $w_j^+$ that are not themselves op-arcs.  Symmetric statements apply to pairs of op-arcs $v_k^- \subset F$ and pairs of op-arcs $w_k^+ \subset P$ and $w_k^- \subset F$.

\bigskip

Near $v_i^+ = v_i^a \cup v_i^b$ in $P$, call the side of $v_i^+$ towards which the normal orientation of $v_i^+$ points the {\em inside of $v_i^+$} and the other side the {\em outside} of $v_i^+$.

\bigskip

{\bf Separation Property of op-arcs:}   Suppose $v_i^+ = v_i^a \cup v_i^b$ is a pair of op-arcs in $P$. Then for any arc $w_j^+$, each component of $w_j^+ - v_i^+$ has ends
\begin{enumerate}
\item both incident to the outside of $v_i^+$ or
\item both incident to $\bdd P$ (when $v_i^+$ and $w_j^+$ are disjoint) or
\item one incident to $\bdd P$ and one incident to the outside of $v_i^+$ or
\item one incident to the inside of $v_i^a$ and one incident to the inside of $v_i^b$
\end{enumerate}
Subarcs of $w_j^+$ of the last type are said to be {\em on the overpass} associated to $v_i^+$ and, in analogy to railroad ties, are called {\em op-ties} for the overpass.  See Figure \ref{fig:Parallelism}.  Components of $w_j^+ - v_j^+$ of the first three types are said to be {\em off the overpass} associated to $v_i^+$.

\bigskip

 {\bf Parallelism Property of op-ties:}   Suppose $v_i^+ = v_i^a \cup v_i^b$ is a pair of op-arcs in $P$. Then all op-ties for the overpass associated to $v_i^+$ are parallel.  To be explicit:  suppose $\aaa$ and $\aaa'$ are two components of $\bdd \oW - v_i^+$ and each has one end incident to the inside of $v_i^a$ and the other incident to the inside of $v_i^b$.  Then the rectangle in $P$ formed by the union of $\aaa, \aaa'$ and subarcs of $v_i^a$ and $v_i^b$ bounds a disk in $P$.
See Figure \ref{fig:Parallelism}.

    \begin{figure}[tbh]
    \centering
    \includegraphics[scale=0.6]{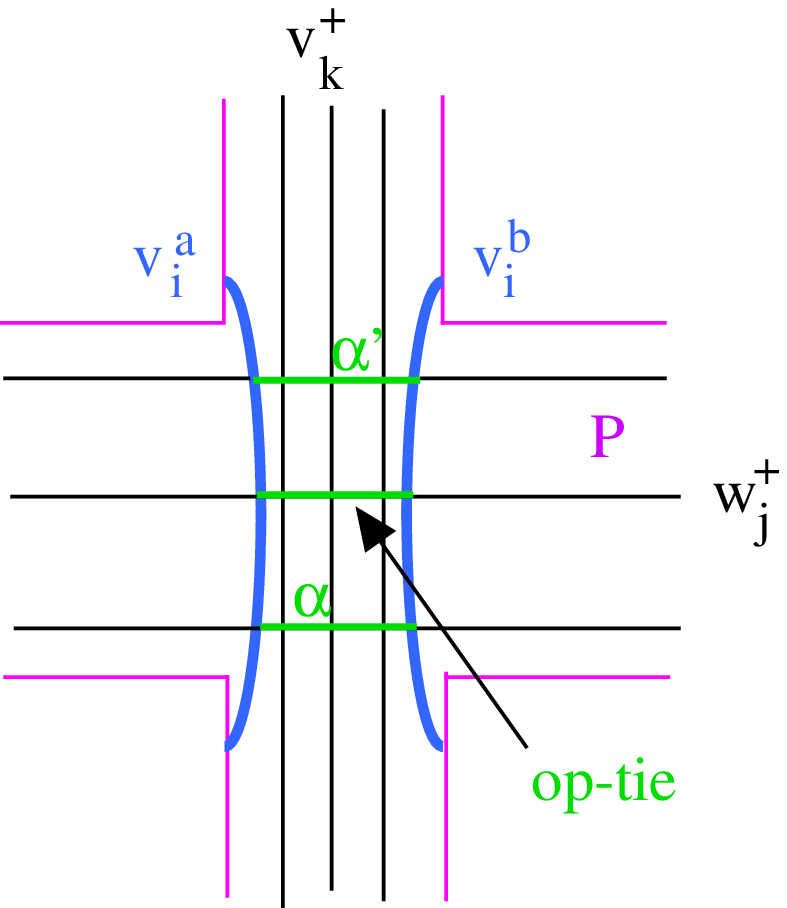}
    \caption{} \label{fig:Parallelism}
    \end{figure}

\bigskip

Two further properties of op-arcs that we assume will be given later. (See Section \ref{sec:oparcs}.)  For now, we only introduce a useful definition:

\begin{defin} \label{defin:ground}  A component of $v_i - \cw$ that lies off of every $\cw$ overpass is called a {\em ground arc} in $v_i$.  Symmetrically, a component of $w_j - \cv$ that lies off of every $\cv$ overpass is called a {\em ground arc} in $w_j$.
\end{defin}

Note that since the op-arcs $\cv$ and $\cw$ are disjoint, each overpass arc is, perhaps counterintuitively, a ground arc.

\section{The first pairings $\rho_{\pm}$}

We have seen that the Separation Property guarantees that for each arc $w_j^+$ and pair of op-arcs $v_i^+ = v_i^a \cup v_i^b$, $|v_i^+ \cap w_j^+| = 2|v_i^a \cap w_j^+|  = 2|v_i^b \cap w_j^+|$.  With this in mind, the following is a natural definition.
   
\begin{defin}  Let $M_+ = \cVp \cup_{S_+} \cWp$, $M_- = \cVm \cup_{S_-} \cWm$, $P \subset S_+$ and $F \subset S_-$ be as above.  Suppose the families of disks $\oV, \oW$ and associated op-arcs $\cv, \cw$ is a stabilizing pair of coherently numbered forests for the pair of Heegaard splittings.  Define two pairings $\rho_{\pm}: \No \times \No \to \Np$ by \begin{itemize} 
\item $\rho_{\pm}(i, j) = |v_i^{\pm}  \cap w_j^{\pm}|$ when neither $v_i^{\pm}$ nor $w_j^{\pm}$ is a pair of op-arcs or
\item $\rho_{\pm}(i, j) = |v_i^{\pm}  \cap w_j^{\pm}|/2$ when either $v_i^{\pm}$ or $w_j^{\pm}$ is a pair of op-arcs
\item $\rho_{\pm}(i, j) = 0$ if $v_i^{\pm}$ or $w_j^{\pm}$ is not defined, e. g.  if $i$  (resp $j$) is not among the indices of the disks in the rooted forest $\oV$ (resp. $\oW$).  That is, when $i$ (resp $j$) is an inactive index.
\end{itemize}
\end{defin}

Explanatory notes:  Here $|v_i^+  \cap w_j^+ |$ (resp $|v_i^-  \cap w_j^- |$) means the number of intersection points, minimized by isotopy rel boundary, of the two arcs $v_i^+$ and $w_j^+$ in $P$ (resp  $v_i^-  \cap w_j^-$ in $F$). See Figure \ref{fig:Matrixrho}.   If  $v_i^+$ is a pair of op-arcs in $P$ then we have seen that $\rho_{+}(i, j)$ is the number of intersections of $w_j^+$ with either one $v_i^a$ or $v_i^b$ of the op-arc pair $v_i^+$ (and symmetrically for a pair of op-arcs $v_i^-$ in $F$ or a pair of op-arcs $w_j^{+} \subset P$ or $w_j^{-} \subset F$). 

 \begin{figure}[tbh]
    \centering
    \includegraphics[scale=0.6]{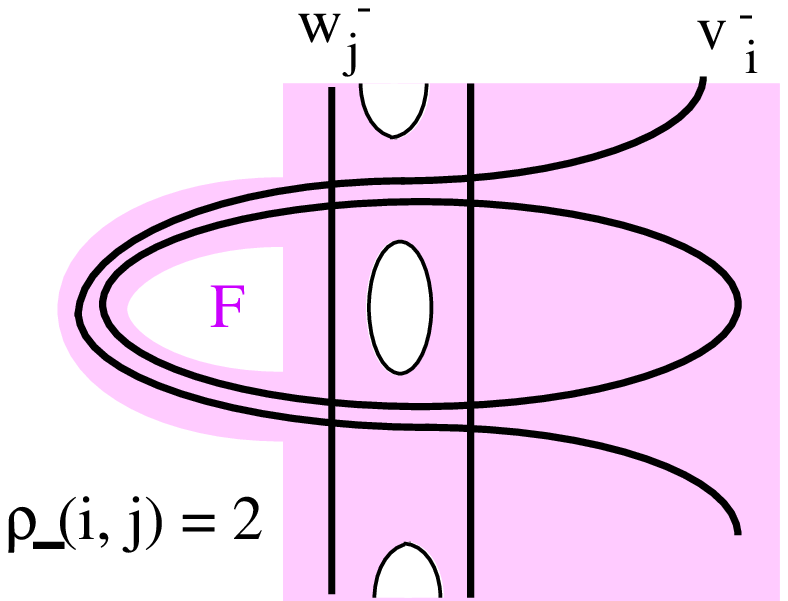}
    \caption{} \label{fig:Matrixrho}
    \end{figure}

\begin{defin}  Suppose $M_+ = \cVp \cup_{S_+} \cWp$ and $M_- = \cVm \cup_{S_-} \cWm$; $P \subset S_+$ and $F \subset S_-$; and forests of disks $\oV, \oW$ and associated op-arcs $\cv, \cw$ are all given as above. A pair $(i, j)$ is {\em peripheral} if for all $(i', j') \neq (i, j)$ with $i' \geq i$ and $j' \geq j$, $\rho_+(i', j') = \rho_-(i', j') =  0$. 
\end{defin}

\begin{figure}[tbh]
    \centering
    \includegraphics[scale=0.6]{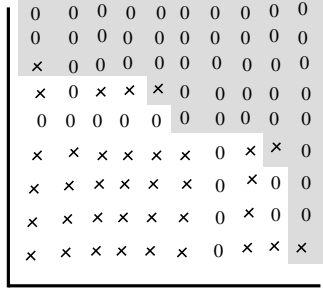}
    \caption{Grey shows peripheral lattice points $(i, j)$; $\times$ means a non-zero entry}
     \label{fig:rhotable1}
    \end{figure}

\begin{lemma}  \label{lemma:terminate} Suppose $(i, j)$ is peripheral and
\begin{itemize} 
\item $\rho_+(i, j) = 1$
\item $V_i \subset \cVp$ 
\item $W_j \subset \cWp$
\end{itemize}
Then $M_+ = \cVp \cup_{S_+} \cWp$ is a stabilized splitting.

Symmetrically,  if $\rho_-(i, j) = 1$, $V_i \subset \cVm$ and $W_j \subset \cWm$, then $M_- = \cVm \cup_{S_-} \cWm$ is a stabilized splitting.
\end{lemma}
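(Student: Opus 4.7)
The plan is to show that the disks $V_i$ and $W_j$ themselves, after a minimizing boundary isotopy in $S_+$, serve as a stabilizing pair for $M_+$; what must be verified is that $|\bdd V_i \cap \bdd W_j| = 1$.

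First I would observe that the $\rho_\pm$ convention forces $\rho_+(0, j) = \rho_+(i, 0) = 0$ for every $i, j$, since the distinguished roots $V_0$ and $W_0$ have no root arc and there is no edge $e_0$, so $v_0^+$ and $w_0^+$ are undefined. The hypothesis $\rho_+(i, j) = 1$ therefore forces $(i, j) \neq (0, 0)$. Consequently the unique point $x_0 \in \bdd V_0 \cap \bdd W_0$ of $\bdd \oV \cap \bdd \oW$ outside $F \cup P$ does not lie on $\bdd V_i \cap \bdd W_j$; together with the fact that $\bdd V_i, \bdd W_j \subset S_+$ are disjoint from $F \subset S_-$, this confines all of $\bdd V_i \cap \bdd W_j$ to $P$.

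Next I would enumerate the arcs of $\bdd V_i \cap P$ and $\bdd W_j \cap P$ using Definition \ref{defin:forests} and the labeling convention. Since $V_i \subset \cVp$, those arcs are $v_i^+$ (the parent-edge arc of $\aaa_i$, or its root arc when $\aaa_i$ is a non-distinguished root) together with one arc $v_k^+$ for each child $\aaa_k$ of $\aaa_i$ in $\cF$. Because no child of any vertex is itself a root, none of these arcs is an op-arc pair, and so $\rho_+(a, b) = |v_a^+ \cap w_b^+|$ directly; the analogous description holds for $\bdd W_j \cap P$.

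The intersection count then reads
\[ |\bdd V_i \cap \bdd W_j| \;=\; \sum_{a, b} \rho_+(a, b), \]
where $a$ ranges over $i$ together with the children-indices of $\aaa_i$, and $b$ ranges over $j$ together with the children-indices of the corresponding vertex for $W_j$. The $(i, j)$ term contributes $1$ by hypothesis, and every other $(a, b)$ satisfies $a \geq i$, $b \geq j$, $(a, b) \neq (i, j)$, so contributes $0$ by peripherality. Extending a minimizing boundary isotopy in $S_+$ to an ambient isotopy of $V_i$ inside $\cVp$ realizes $|\bdd V_i \cap \bdd W_j| = 1$, showing that $V_i$ and $W_j$ stabilize $M_+$. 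The symmetric statement for $M_-$ follows by interchanging $+$ and $-$. The only real obstacle is the bookkeeping: checking that every arc of $\bdd V_i \cap P$ and $\bdd W_j \cap P$ carries an index satisfying the peripherality bound and is a single arc rather than an op-arc pair (which would contaminate the $\rho_+$ entries with factors of $\tfrac12$).
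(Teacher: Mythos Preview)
Your argument is correct and follows essentially the same route as the paper's proof: both use the labeling convention to list the arcs of $\bdd V_i\cap P$ and $\bdd W_j\cap P$ as $v_i^+$ together with arcs $v_{i'}^+$ for $i'>i$ (and symmetrically for $W_j$), and then invoke peripherality to kill every cross-term except $v_i^+\cap w_j^+$. Your version simply adds two bits of bookkeeping the paper leaves implicit---checking that $x_0$ cannot contribute (since $\rho_+(i,j)=1$ forces $i\neq 0$ and $j\neq 0$) and verifying that none of the relevant arcs is an op-arc pair, so that $\rho_+$ literally counts intersection points.
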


\begin{proof}  $V_i \subset \cVp$ and $W_j \subset \cWp$ will be the stabilizing disks.  By the labeling convention, $\bdd V_i \cap P$ consists of arcs $v_i^+$ and (possibly) other arcs $v_{i'}^+, i' > i$.  Similarly, $\bdd W_j \cap P$ consists of the arc $w_j^+$ and (possibly) other arcs $w_{j'}^+, j' >j$.   Since $(i, j)$ is peripheral, each $v_{i'}^+$ with $i' > i$ is disjoint from $\bdd W_j$ and each $w_{j'}^+$ with $j' > j$ is disjoint from $\bdd V_i$.  Hence the only points in $\bdd V_i \cap \bdd W_j$ are those in $v_i^+ \cap w_j^+$.  Since $\rho_+(i, j) = 1$, there is exactly one such point.  Hence $\bdd V_i \cap \bdd W_j$ is a single point, so $\cVp \cup_{S_+} \cWp$ is a stabilized splitting. \end{proof}

\section{Further properties of the op-arcs} \label{sec:oparcs}

We now introduce two further properties which pairs of op-arcs are assumed to satisfy.  Since there are no op-arcs in the Seminal Example above, these properties are vacuously satisfied in that example.  Part of the argument will be to show that the fundamental construction described below preserves all these properties of pairs of op-arcs.  This section describes properties of op-arcs $v_i^+$ in $P$; symmetric statements are true for op-arcs $w_j^+$ in $P$ and op-arcs 
$v_i^-$ and $w_j^-$ in $F$.  It may be helpful, when $v_i^+$ is specifically meant to be a pair of op-arcs, to denote it $\cv_i$ and when $w_j^+$ is meant to be a pair of op-arcs, denote it by $\cw_j$.

\bigskip

{\bf Ordering Property for op-ties:} Suppose $\aaa \subset \bdd \oW$ is an op-tie for the pair of op-arcs $\cv_i$.  For any $k \geq i$, $v_k^+$ is disjoint from the interior of $\aaa$.  

In particular, suppose $\cv_k = v_k^+, k > i$ is also a pair of op-arcs $\cv_k$, and $\aaa'$ is an op-tie for the overpass associated with $\cv_k$ with $\aaa \cap \aaa' \neq \emptyset$.  Then neither end of the arc $\aaa \cap \aaa'$ can lie on $\cv_k$, so both ends lie on $\cv_i$ and $\aaa \subset \aaa'$.  See Figure \ref{fig:Ordering}.

 \begin{figure}[tbh]
    \centering
    \includegraphics[scale=0.6]{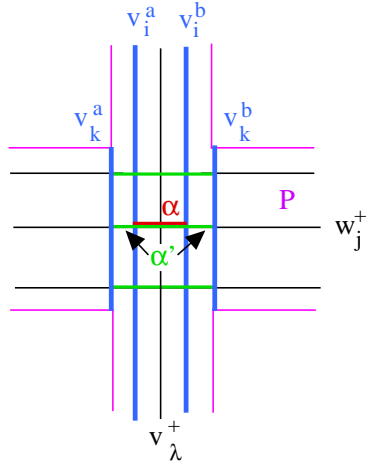}
    \caption{Here the Ordering Property implies $\lambda < i < k.$} \label{fig:Ordering}
    \end{figure}

\bigskip

The last property of op-arc pairs, stated below, requires some background:  Here is a way of using the pairs of op-arcs in $P$ to construct a new surface $\hP$.  This construction will be called {\em building the overpasses}.  It is parallel to the idea of an ``abstract tree" for $F$ and $P$, found in \cite{Q}.  It's important to understand that, as with the abstract tree, the building of overpasses is done in the abstract, as a way to express a property of the op-arcs, and is {\em not} a construction actually performed inside of $M_+$ or $M_-$. Here we describe how to build a single overpass, one associated to the  pair of op-arcs $\cv_i  = \cv_i^a \cup \cv_i^b$.  First cut $P$ along $\cv_i$.  The resulting surface $P'$ has two copies of $\cv_i^a$ and two copies of $\cv_i^b$ in its boundary.  One copy of $\cv_i^a$ in $\bdd P'$ is incident to the outside of $\cv_i^a$ in $P$ and one copy of $\cv_i^b$ is incident to the outside of $\cv_i^b$ in $P$.  Identify these two arcs in $\bdd P'$ and call the resulting arc $v''^+_i$ and surface $P''$.  The other copies of  $\cv_i^a$ and  $\cv_i^b$ remain in $\bdd P''$.  

Building the overpass as described does nothing particularly interesting to the other arcs $v_k$, since these are disjoint from $\cv_i$.  The arcs $\{ w_j^+ \}$ that intersect $\cv_i$ are cut up when the overpass is built:  Each arc in $\{ w_j^+ \}$ naturally gives rise to perhaps many properly embedded arcs in $P'$ (each $w_j^+$ is cut into pieces by $\cv_i$) and, less obviously, to a single special arc in $P''$:  By the Parallelism Property, the ends of $w_j^+ - \cv_i$ lying just outside $\cv_i^a$ match naturally with the ends of $w_j^+ - \cv_i$ lying just outside $\cv_i^b$ and so can be attached in $P''$ to become a proper arc $w''^+_j$ in $P''$.  A simple picture of the special arc $w''^+_j$ is that it is the arc obtained from $w_j^+$ by collapsing all the op-ties of $w_j^+$ that lie on the overpass associated to $\cv_i$.  The upshot is that, in $P''$, $w_j^+$ is fractured into a collection of op-ties, each now a proper arc in $P''$ and no longer indexed, plus a single arc $w''^+_j$ that is the end-point union of all subarcs of $w_j^-$ that lie off the overpass.

 \begin{figure}[tbh]
    \centering
    \includegraphics[scale=0.6]{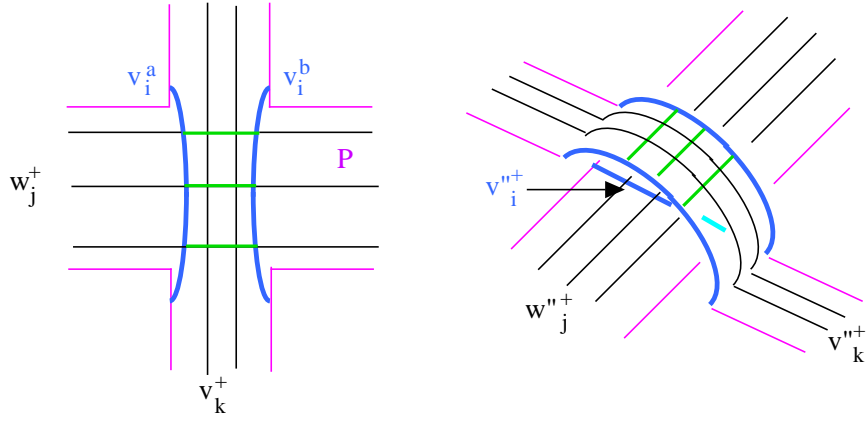}
    \caption{Building an overpass.  Green arcs on right are unindexed} \label{fig:Building}
    \end{figure}

Define $\hP$ to be the surface obtained from $P$ by building all the overpasses at once.  That is, perform the operation just described on all pairs of op-arcs $\cv$ and $\cw$ simultaneously.  There is no ambiguity in the construction, since $\cv$ and $\cw$ are assumed to be disjoint.  It may be worth noting (but is not important to the argument) that, following the Ordering Property above, when an op-tie $\aaa$ for $\cv_i$ overlaps with an op-tie $\aaa'$ for $\cv_k, k > i$ then $\aaa \subset \aaa'$.  So in the construction of $\hP$,  $\aaa'$ is fractured into pieces by $\cv_i$ and the proper arc in $\hP$ corresponding to $\aaa'$ is obtained by assembling those pieces that lie off the overpasses $\cv_i$ (and whatever other overpasses may pass through $\aaa'$).

\bigskip

Our final and most delicate assumption on op-arcs is:

\bigskip

{\bf  Disk Property:} The surface $\hP$ is a disk.

\bigskip

In the disk $\hP$ each curve $v_i^+$ and each curve $w_j^+$ may be fractured into many pieces.  One component constructed out of $w_j^+$, for example, will be the end point union of all subarcs of $w_j^+$ that lie off of every overpass, i. e. the ground arcs of $w_j^+$.  (See Definition \ref{defin:ground}.)  This arc   in $\hP$ corresponding to $w_j^+$ is denoted $\hat{w}_j^+$ (and appears as $w"^+_j$ in Figure \ref{fig:Building}, where only one overpass is built).   Another component constructed out of $w_j^+$ might be the end point union of all subarcs of $w_j^+$ that lie on the overpass determined by $\cv_k$ and off all overpasses determined by any $\cv_i, i < k$.  We have no notation for such arcs, since arcs in $\hP$ coming from op-ties will play no role in the argument.   Among op-arcs, the pair of arcs $\cv_i$  (resp $\cw_j$) in $P$ becomes a single proper subarc of $\hP$ which we denote $\hat{v}_i^+$ (resp. $\hat{w}_j^+$). (The curve $\hat{v}_i^+$ appears as $v"^+_i$ in Figure \ref{fig:Building}.)  The union of all such curves (coming from $\cv$ and $\cw$) in $\hP$ will be denoted $\hat{\cv}$ and $\hat{\cw}$ respectively.  Of course they are no longer op-arcs in $\hP$ because all overpasses have been built there.

\section{The pairing $\sss$ of arcs in $\hP$}

\begin{lemma} \label{lemma:efficient}  Any two arcs $\hv_i^+$ and $\hw_j^+$ (resp. $\hv_i^-$ and $\hw_j^-$) intersect efficiently in $\hP$ (resp $\hF$).  That is,  $|\hv_i^+ \cap \hw_j^+|$ cannot be reduced by isotopies of $\hv_i^+$ and $\hw_j^+$ in $\hP$ rel $\bdd \hP$.
\end{lemma}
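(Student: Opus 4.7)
The Disk Property reduces the lemma to a statement about two properly embedded arcs in a disk. Two such arcs with endpoints on the boundary intersect efficiently if and only if they bound no embedded bigon, which in a disk is equivalent to meeting in at most one point, the linking of their boundary endpoints on $\bdd \hP$ determining whether that minimum is $0$ or $1$. So the task reduces to showing that $\hv_i^+$ and $\hw_j^+$ cobound no bigon in $\hP$.

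I would argue by contradiction: suppose there is an innermost bigon $B \subset \hP$ with $\bdd B = a \cup b$, where $a$ is a subarc of $\hv_i^+$, $b$ is a subarc of $\hw_j^+$, they meet only at two common endpoints, and the interior of $B$ is disjoint from $\hv_i^+ \cup \hw_j^+$. The plan is then to transfer $B$ back to $P$ by reversing the overpass-building operation. Since $\hP$ is obtained from $P$ by cutting along each pair of op-arcs in $\cv \cup \cw$ and identifying the outside copies, cutting $B$ along $B \cap (\hcv \cup \hcw)$ and lifting each resulting piece produces a finite collection of disks in $P$, bounded by subarcs of $v_i^+$, $w_j^+$, $\bdd P$, and arcs of $\cv \cup \cw$. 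Reassembled along their op-arc boundaries, these pieces recover an ``abstract bigon'' in $P$ whose two corners are transverse intersections of a ground arc of $v_i^+$ with a ground arc of $w_j^+$.

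The contradiction should then use the structural properties of op-arcs: the Separation Property constrains how each arc meets an op-arc pair, the Parallelism Property forces op-ties for a single pair to cobound trivial rectangles, and the Ordering Property nests them coherently. The clean special case is when the interior of $B$ is disjoint from $\hcv \cup \hcw$ as well: then $B$ lifts to an honest bigon in $P$ between subarcs of $v_i^+$ and $w_j^+$, contradicting minimality of $|v_i^+ \cap w_j^+|$ once the arcs in $P$ have been arranged to intersect efficiently. The general case should reduce to this by an innermost-disk argument on $B \cap (\hcv \cup \hcw)$, peeling off one op-arc crossing at a time and using the three properties to verify that each step actually permits a reduction.

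The main obstacle will be the combinatorial bookkeeping: each time $\bdd B$ enters and exits a region associated to an op-arc pair, the corresponding piece of the pre-image in $P$ can lie on either side of the pair, and making these pieces fit together coherently -- in a manner consistent with Separation, Parallelism, and Ordering -- is delicate. A cleaner route is probably to build the overpasses one at a time, in an order compatible with the Ordering Property, and show inductively that efficient intersection between the partial arcs in the partially-built surface is preserved by each overpass-building step.
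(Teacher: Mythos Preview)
Your overall strategy---assume a bigon, handle the clean case where its interior misses all op-arc curves by lifting to $P$, then reduce the general case to the clean one via an innermost argument---matches the paper's proof. But you are anticipating far more work in the general case than is actually required, and you are invoking machinery (Separation, Parallelism, Ordering) that the paper does not use here at all.

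The simplification you are missing is this: the curves in $\hcv$ are disjoint from $\hv_i^+$ (since op-arcs in $\cv$ are disjoint from all $v$-arcs), and the curves in $\hcw$ are disjoint from $\hw_j^+$. So if some arc of $\hcv \cup \hcw$ enters the bigon $B$, each component of $\hcw \cap B$ has both ends on the $\hv_i^+$ side, and each component of $\hcv \cap B$ has both ends on the $\hw_j^+$ side. An innermost such arc therefore cuts off a smaller bigon $B' \subset B$ whose sides are a subarc of $\hv_i^+$ (say) and a subarc of some $\hcw_k^+$, and whose interior is now disjoint from $\hcv \cup \hcw$. This $B'$ lifts directly to $P$ and contradicts the assumed efficient intersection of $v_i^+$ with $w_k^+$ there. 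Note the contradiction is with a \emph{different} $w$-arc, namely the op-arc $w_k^+$, not with $w_j^+$.

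So the ``combinatorial bookkeeping'' you worry about evaporates: there is no need to reassemble pieces across op-arc boundaries, no need to peel off crossings one at a time, and no need for the Separation, Parallelism, or Ordering Properties. The paper's entire proof is two short paragraphs. Your alternative suggestion of building overpasses one at a time and arguing inductively would also work, but is likewise unnecessary.
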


\begin{proof} We must show that no complementary component of the two curves in $\hP$ is a bigon, that is, a disk bounded by the union of a subarc of $\hv_i^+$ and a subarc of $\hw_j^+$.  Suppose, towards a contradiction, that there were such a bigon $B$.  Since $\cv$ and $\cw$, hence $\hat{\cv}$ and $\hat{\cw}$, are disjoint, at least one side of the bigon, say the side on $\hv_i^+$ does not come from an op-arc.

Consider first the case in which the interior of $B$ is disjoint from all curves $\hcv \cup \hcw$ coming from op-arcs.  Then $B$ would also lie in $P$ since the interior of $B$ is disjoint from  the curves $\hcv \cup \hcw$ along which $P$ was cut and glued.  But the conclusion $B \subset P$ would violate our initial assumption that the curves $v_i^+ \subset P$ and $w_j^+ \subset P$ intersect efficiently in $P$.

Now suppose that the interior of $B$ is not disjoint from $\hcv \cup \hcw$.  Since $\hv_i^+$ intersects only op-arcs in $\hcw$ and $\hw_j^+$ intersects only op-arcs in $\hcv$, any component of $\hcw \cap B$ (resp $\hcv \cap B$) would have both ends on $\hv_i^+$ (resp $\hw_j^+$).  Hence there is a bigon $B' \subset B$ between a subarc of $\hv_i^+$ (say) and a subarc of some $\hcw_k^+ \in \hcw$.  Moreover, if $B'$ is chosen to be an innermost such example, then the interior of $B'$ would be disjoint from $\hcv \cup \hcw$.  Then, just as above, $B'$ would lie entirely in $P$ and this would violate the initial assumption that the arcs $v_i^+$ and $w_k^+$ intersect efficiently in $P$.
\end{proof}

Following Lemma \ref{lemma:efficient}, the Disk Property leads naturally to a new pairing:

\begin{defin}  Analogous to the intersection pairings $\rho_{\pm}$ in $P$ and $F$ define intersection pairings $\sss_{\pm}: \No \times \No \to \Np$ in the disks $\hP$ and $\hF$ by 
\begin{itemize} 
\item $\sss_{\pm}(i, j) = |\hv_i^{\pm}  \cap \hw_j^{\pm}|$ or
\item $\sss_{\pm}(i, j) = 0$ if $\hv_i^{\pm}$ or $\hw_j^{\pm}$ is not defined. (That is, if $i$ or $j$ is an inactive index.)
\end{itemize}
\end{defin}

\begin{lemma}  \label{lemma:sigmaone} For each $(i, j)$
\begin{enumerate}
\item $\sss_{\pm}(i, j)  \leq \rho_{\pm}(i, j)$ and 
\item $\sss_{\pm}(i, j)  \leq 1$
\end{enumerate}
\end{lemma}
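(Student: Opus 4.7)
The plan is to prove each of (1) and (2) for $\sss_+$; the $\sss_-$ statement is symmetric, obtained by interchanging the roles of $P$ with $F$. Part (2) will follow almost immediately from the Disk Property together with Lemma \ref{lemma:efficient}. Part (1) will require a short case analysis based on whether $v_i^+$ and/or $w_j^+$ is a pair of op-arcs, and reduces in each case to tracing how crossings in $P$ become crossings in $\hP$.

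For Part (2), the Disk Property gives that $\hP$ is a disk, and Lemma \ref{lemma:efficient} gives that $\hv_i^+$ and $\hw_j^+$ intersect efficiently there. In a disk, two properly embedded arcs with disjoint endpoints on the boundary that intersect efficiently must meet in at most one point, for otherwise two consecutive crossings along either arc would cut off an innermost bigon, contradicting efficiency. Hence $\sss_+(i,j) \leq 1$.

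For Part (1) there are three cases. If neither $v_i^+$ nor $w_j^+$ is a pair of op-arcs, then $\hv_i^+$ is the end-point union of ground arcs of $v_i^+$, joined at identification points on $\cw$, while $\hw_j^+$ is built analogously from ground arcs of $w_j^+$ joined at identification points on $\cv$. Disjointness of $\cv$ and $\cw$ makes these two identification-point sets disjoint, so every intersection of $\hv_i^+$ with $\hw_j^+$ in $\hP$ comes from a ground-arc crossing in $P$, which is in particular a point of $v_i^+ \cap w_j^+$ in $P$; hence $\sss_+(i,j) \leq \rho_+(i,j)$. If exactly one, say $v_i^+ = \cv_i = \cv_i^a \cup \cv_i^b$, is a pair of op-arcs, then Separation decomposes $|w_j^+ \cap \cv_i^a|$ into $T$ inside crossings (endpoints of op-ties) and $n^{out}$ outside crossings, giving $\rho_+(i,j) = n^{out} + T$. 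Parallelism matches the $n^{out}$ outside crossings on $\cv_i^a$ bijectively with those on $\cv_i^b$ under the identification that builds $\hv_i^+$; each matched pair becomes a single transverse crossing of $\hw_j^+$ with $\hv_i^+$ in $\hP$, while op-ties contribute nothing to $\hw_j^+$, having been collapsed in its construction. Hence $\sss_+(i,j) \leq n^{out} \leq \rho_+(i,j)$. Finally, if both $v_i^+$ and $w_j^+$ are pairs of op-arcs, then $\rho_+(i,j) = 0$ by disjointness of $\cv$ and $\cw$, a disjointness preserved by the two local overpass constructions, so $\sss_+(i,j) = 0$.

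The hard step is the one-op-arc case: verifying that each matched pair of outside crossings yields a transverse (rather than tangential) crossing in $\hP$. This will reduce to the observation that the two sides of $v''^+_i$ in $\hP$ correspond respectively to the two initially disjoint outside regions of $\cv_i^a$ and $\cv_i^b$ in $P$, so the two pieces of $\hw_j^+$ meeting at such a point approach from opposite sides of $\hv_i^+$.
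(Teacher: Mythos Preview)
Your argument reaches the right conclusions and uses the same two ingredients as the paper (the Disk Property plus Lemma~\ref{lemma:efficient} for part~(2); the observation that intersections in $\hP$ arise from intersections in $P$ for part~(1)), but you work much harder than necessary on part~(1).  The paper dispatches it in one sentence: any point of $\hv_i^+\cap\hw_j^+$ in $\hP$ is, by the construction of $\hP$, already a point of $v_i^+\cap w_j^+$ in $P$ that happens to lie off every overpass.  When $v_i^+$ is an op-arc pair this still works, since a point of $\hv_i^+$ corresponds to a point of $\cv_i^a$, and $|\cv_i^a\cap w_j^+|$ is exactly $\rrr_+(i,j)$ by the halving convention.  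No case split is needed, and no appeal to Parallelism or to the transversality discussion in your final paragraph.

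Your one-op-arc case also contains a genuine confusion.  The Separation Property does \emph{not} partition the crossings $w_j^+\cap\cv_i^a$ into two disjoint classes ``inside'' and ``outside.''  At every transverse crossing of $w_j^+$ with $\cv_i^a$, one adjacent subarc of $w_j^+$ lies on the inside (and is therefore the end of an op-tie, by Separation type~(4)) while the other lies on the outside (type~(1) or~(3)).  So every crossing is simultaneously an op-tie endpoint and an outside-subarc endpoint; your quantities $T$ and $n^{out}$ coincide rather than sum, and the formula $\rrr_+(i,j)=n^{out}+T$ is incorrect.  The desired inequality survives anyway, since each crossing of $\hw_j^+$ with $\hv_i^+$ in $\hP$ sits over a single crossing of $w_j^+$ with $\cv_i^a$ in $P$, and there are exactly $\rrr_+(i,j)$ of those.
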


\begin{proof}  As usual, we focus on $\sss_+$ defined on arcs in $P$; the case for $\sss_-$ defined on arcs in $F$ is symmetric.

 For the first claim, note that any intersection point of $\hv_i^{+}$ with $\hw_j^{+}$ in $P$ is merely a particular type of intersection point of $v_i^+$ with $w_j^+$, namely one which is not on any overpass.

The second claim follows immediately from the fact that $\hP$ is a disk and, following Lemma \ref{lemma:efficient}, the arcs $\hv_i^+$ and $\hw_j^+$ intersect efficiently in $\hP$. \end{proof}

\begin{cor} \label{cor:periph} If $(i, j)$ is peripheral, then $\rho_{\pm}(i, j) = \sss_{\pm}(i, j) \leq 1$.
\end{cor}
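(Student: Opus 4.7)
\emph{Proof plan.} Lemma~\ref{lemma:sigmaone} already gives $\sss_{\pm}(i,j)\le \rho_{\pm}(i,j)$ and $\sss_{\pm}(i,j)\le 1$, so all that remains is the reverse inequality $\rho_{\pm}(i,j)\le \sss_{\pm}(i,j)$ under peripherality.  I will prove this for $(\rho_+,\sss_+)$; the $F$-side is formally identical.  The approach is to track each contribution to $\rho_+(i,j)$ through the building of overpasses that produces $\hP$ and show that none is lost, using the Ordering Property (to eliminate bad intersections in the generic case) and the Parallelism Property (to match surviving intersections in the op-arc case).

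If both $v_i^+$ and $w_j^+$ are pairs of op-arcs, they are disjoint by hypothesis and $\rho_+(i,j)=0$.  If neither is a pair of op-arcs, a crossing $p\in v_i^+\cap w_j^+$ survives to a point of $\hv_i^+\cap \hw_j^+$ unless $p$ lies on some op-tie.  If $p$ lies on a subarc of $w_j^+$ that is an op-tie for some pair $\cv_k$, then $w_j^+$ crosses $v_k^+=\cv_k$, so $\rho_+(k,j)\ge 1$, and since $v_i^+$ meets the interior of that op-tie, the Ordering Property forces $k>i$.  Then $(k,j)\neq (i,j)$ with $k\ge i,\ j\ge j$ contradicts peripherality.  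The symmetric case of $p$ lying on a subarc of $v_i^+$ that is an op-tie for some $\cw_\ell$ yields $\rho_+(i,\ell)\ge 1$ with $\ell>j$, again contradicting peripherality.  So every intersection point survives in this subcase.

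It remains to handle the case in which exactly one of $v_i^+,w_j^+$ is a pair of op-arcs, say $v_i^+=\cv_i$.  By Separation, every inside component of $w_j^+-\cv_i$ is an op-tie, and $\rho_+(i,j)$ equals the number of op-ties of $w_j^+$ over $\cv_i$.  For each op-tie $\aaa$, the two off-overpass subarcs of $w_j^+$ adjacent to $\aaa$ terminate at outside-copy endpoints on $\cv_i^a$ and $\cv_i^b$ respectively; when the overpass is built, these two outside copies are identified to form $\hv_i^+$, and the Parallelism Property guarantees that the two terminations adjacent to $\aaa$ are identified to a single point of $\hv_i^+$.  Each op-tie therefore contributes exactly one crossing of $\hw_j^+$ with $\hv_i^+$ in $\hP$, so $\sss_+(i,j)\ge \rho_+(i,j)$ in this case too.

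I expect the main obstacle to be this last case: the raw intersections of $\cv_i$ with $w_j^+$ in $P$ all sit at op-tie endpoints, and naively they appear to vanish on passing to $\hP$ as the op-ties are sheared off from $\hw_j^+$.  The subtle point is that the Parallelism Property forces the identification of outside copies of $\cv_i^a$ and $\cv_i^b$ along the overpass to produce exactly the compensating crossings on $\hv_i^+$, so that the number of op-ties of $w_j^+$ over $\cv_i$ equals $|\hv_i^+\cap \hw_j^+|$.
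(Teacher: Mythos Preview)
Your Cases 1 and 2 are correct and essentially match the paper's argument.  The gap is in Case 3.

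In Case 3 you argue that each op-tie $\aaa$ of $w_j^+$ over $\cv_i$ contributes one crossing of $\hw_j^+$ with $\hv_i^+$ in $\hP$, but your justification only treats the building of the single overpass $\cv_i$.  The arc $\hw_j^+$ is assembled from the \emph{ground arcs} of $w_j^+$, i.e.\ the components of $w_j^+ - \cv$ lying off \emph{every} $\cv$ overpass, not merely off the $\cv_i$ overpass.  The ``off-overpass subarcs of $w_j^+$ adjacent to $\aaa$'' you invoke are off the $\cv_i$ overpass by the Separation Property, but nothing you have said prevents the endpoint $x \in \cv_i^a \cap w_j^+$ of $\aaa$ from lying in the interior of an op-tie of $w_j^+$ for some other pair $\cv_k$.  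If that happens, the subarc adjacent to $\aaa$ on the outside of $\cv_i^a$ is part of an op-tie for $\cv_k$, hence not a ground arc, and the would-be crossing never appears on $\hw_j^+$.  Since your Case 3 argument never uses peripherality, it cannot be correct as stated: without peripherality one can certainly have $\rrr_+(i,j) > 1 \geq \sss_+(i,j)$.

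The repair is precisely the mechanism you already used in Case 2.  If $x \in \cv_i^a \cap w_j^+$ lay in the interior of an op-tie for $\cv_k$, the Ordering Property would give $k > i$, and the ends of that op-tie would give $\rrr_+(k,j) \geq 1$, contradicting peripherality.  (The $\cw$ side is automatic here since $\cv_i$ is disjoint from $\cw$.)  This is exactly how the paper proceeds: it does not split into cases at all, but shows uniformly that any $x \in v_i^+ \cap w_j^+$ lies in the interior of no op-tie of either type, and hence survives to a point of $\hv_i^+ \cap \hw_j^+$.  Your case division is harmless but unnecessary, and in Case 3 it led you to drop the peripherality hypothesis just where it is still doing work.
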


\begin{proof}  Following Lemma \ref{lemma:sigmaone}, the statement is obvious if $\rho_{\pm}(i, j) = 0$.  Suppose, say, $\rho_{+}(i, j) > 0$, and $x \in v_i^+ \cap w_j^+$.  If $x$ were in the interior of any op-tie in $w_j^+$, coming from a pair of op-arcs $\cv_k$, say, then it would follow from the Ordering Property that $k > i$. Then the ends of the op-tie would be points in $v_k^+ \cap w_j^+$, contradicting the fact that $(i, j)$ is peripheral.  Hence $x$ lies on no overpass associated with any of the $\cv$.  Symmetrically, it's on no overpass associated with any of the $\cw$.  Hence $x \in \hv_i^+ \cap \hw_j^+ \subset \hP$.  

Summarizing, this shows that for any peripheral $(i, j)$,  $\sss(i, j)_+ \geq \rho(i,j)_+$.  The result then follows from Lemma \ref{lemma:sigmaone}.
\end{proof}  

\begin{defin}  Suppose $M_+ = \cVp \cup_{S_+} \cWp$, $M_- = \cVm \cup_{S_-} \cWm$ and surfaces $P \subset S_+$ and $F \subset S_-$ are given as above and disks $\oV, \oW$ and associated op-arcs $\cv, \cw$ is a stabilizing pair of coherently numbered forests for the pair of Heegaard splittings.  Then the forests are {\em coordinated} if for all $(i, j) \in \No \times \No$, $\sss_+(i, j) = \sss_-(i, j)$.
\end{defin}

\noindent {\bf Seminal Example:} For the Seminal Example, it was observed that for all $(i, j) \in \No \times \No$, $\rho_+(i, j) = \rho_-(i, j)$.  But in that example there are no op-edges, so $\hP = P, \hF = F$.  Then for all  $(i, j)$, $\sss_{+}(i, j) = \rho_{+}(i, j) = \rho_{-}(i, j) = \sss_{-}(i, j)$.  Hence the forests of disks in the Seminal Example are coordinated.

\section{A digression on some operations on curves and surfaces} \label{sec:surfaces}

Suppose $A$ is an annulus containing a core circle $c$ and two spanning arcs $e$ and $w$.   Suppose $\lambda_w$ is a proper arc in $A$ that intersects $w$ once and is disjoint from $c$ and $e$.  Then there is an arc $\lambda_e$ in $A$, unique up to isotopy rel $\bdd$, that has the same ends as $\lambda_w$ but is disjoint from $c$ and $w$ and intersects $e$.  One way of describing how $\lambda_e$ is derived from $\lambda_w$ is to band-sum $\lambda_w$ to $c$ along $w$.  The same is true if $\lambda_w$ consists of a disjoint family of arcs in $A$, each component of which intersects $w$ in a single point and is disjoint from $w$ and $e$.  The change could be described as band-summing $\lambda_w$ along $w$ to $c$; as many copies of $c$ are band-summed as there are components of $\lambda_w$.  See Figure \ref{fig:Westoeast}.
 
  \begin{figure}[tbh]
    \centering
    \includegraphics[scale=0.6]{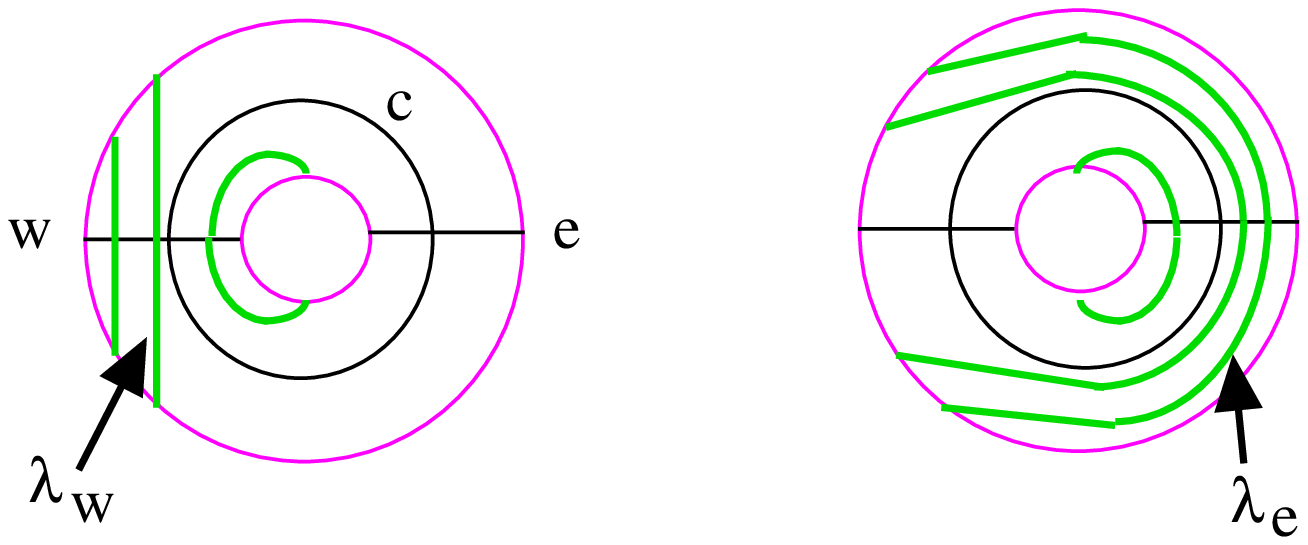}
    \caption{} \label{fig:Westoeast}
    \end{figure}

More generally, suppose that $c$ is a simple closed curve in a surface $P$ and $w$ is a properly embedded arc in $P$ that intersects $c$ once.  Suppose $\lambda$ is a properly embedded $1$-manifold in $P$ that is disjoint from $c$ and intersects $w$ transversally.  Then a small regular neighborhood $\eta(c \cup w) \subset P$ can be viewed as an annulus $A$ in which $w$ is a spanning arc, $\lambda$ intersects $A$ in proper arcs, each of which intersects $w$ once, and each of which is disjoint from $c$ and from a distant fiber of $\eta(c) \subset \eta(c \cup w) $.  Performing the operation above to $\lambda \cap A$ will be called {\em band-summing $\lambda$ to $c$ along $w$.}  See Figure \ref{fig:Bandsum}.

  \begin{figure}[tbh]
    \centering
    \includegraphics[scale=0.6]{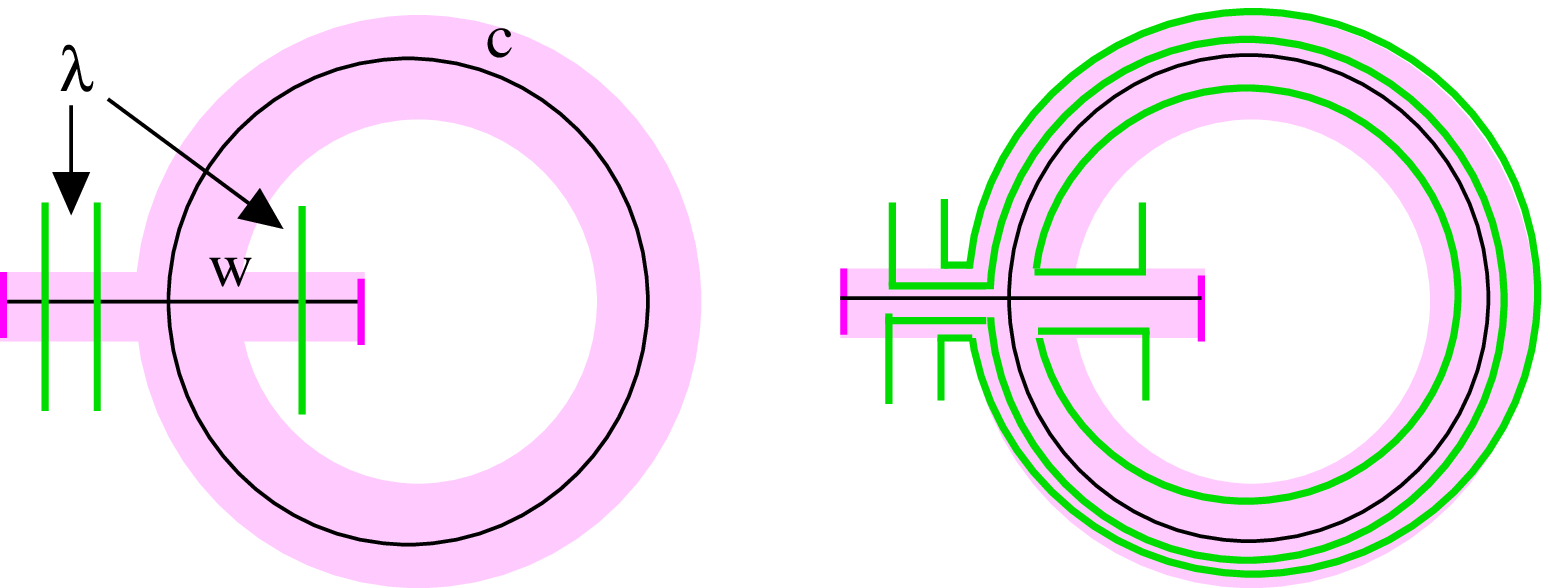}
    \caption{} \label{fig:Bandsum}
    \end{figure}

Here is an additional feature of this band-sum operation.  Suppose $M$ is a $3$-manifold and $P \subset  \bdd M$.  Suppose there are proper disks $C$ and $D$ in $M$ so that $\bdd C = c$ and $\bdd D = \lambda$.  Then after the operation, $\lambda$ still bounds a disk, one obtained by boundary-summing $\bdd D$ to one copy of $C$ for each point in $\lambda \cap w$.  This operation will be called {\em tube-summing $D$ to $C$ along $w$.}  See Figure \ref{fig:Tubesum}. 

  \begin{figure}[tbh]
    \centering
    \includegraphics[scale=0.6]{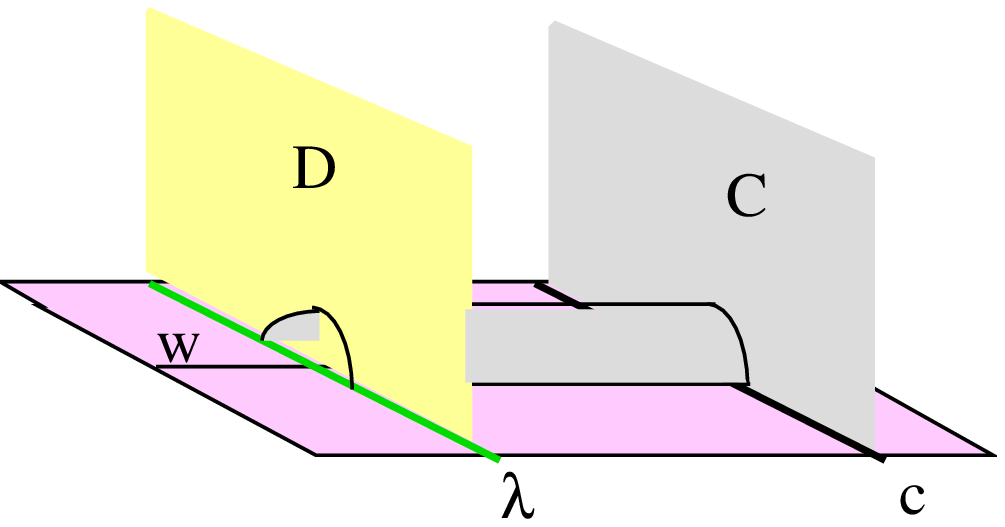}
    \caption{} \label{fig:Tubesum}
    \end{figure}

Now suppose $P$ is a compact orientable surface and $v, w \subset P$ are properly embedded arcs in $P$ that meet at a single point.  
Define a new orientable surface $P_{v - w}$ by the following operation: add a band to $P$ with its ends attached at the pair of points $\bdd v \subset \bdd P$.  Then remove a neighborhood of $w$.

$P$ and $P_{v - w}$ have the same Euler characteristic; whether they are homeomorphic or not then depends only on whether the operation changes the number of boundary components.  In any case, we have:

\begin{lemma} \label{lemma:Commute1}There is a homeomorphism $\phi_{v, w}: P_{w - v} \to P_{v - w} $ that is the identity away from $\eta(v \cup w)$.

\end{lemma}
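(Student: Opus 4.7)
The plan is to localize the question to a regular neighborhood of $v \cup w$ and reduce to a comparison of disks with marked boundary arcs. Let $N = \eta(v \cup w) \subset P$; since $v$ and $w$ are properly embedded arcs meeting transversally in a single point, $N$ is a disk whose boundary alternates, going around $\partial N$, between four arcs on $\partial P$ (neighborhoods of the four points of $\partial v \cup \partial w$) and four interior arcs $\ddd_1, \ddd_2, \ddd_3, \ddd_4$ comprising $\partial N \setminus \partial P$. Both operations are supported inside $N$ together with an externally attached band, so $P_{v-w}$ and $P_{w-v}$ agree with $P \setminus \mathrm{int}(N)$ outside $N$, and it suffices to construct a homeomorphism between the two local replacements $N_{v-w}$ and $N_{w-v}$ restricting to the identity on each $\ddd_k$.

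First I would observe that both local replacements are disks. Cutting $N$ along $w$ splits it into two disks $L$ and $R$ (the $\ddd_k$ distributed two per side), and reattaching via the band at $\partial v$ reconnects them into a single disk, as a quick Euler-characteristic count confirms. The same applies to $N_{w-v}$. Each of these disks carries the four $\ddd_k$ as disjoint subarcs of its boundary, and a disk with marked disjoint boundary arcs is determined up to homeomorphism rel those arcs by the oriented cyclic order in which they appear around the boundary.

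The crux is therefore to verify that the $\ddd_k$ occur in the same cyclic order in $\partial N_{v-w}$ as in $\partial N_{w-v}$. I would work in a concrete model, taking $N$ to be a square with $v$ and $w$ its two diameters, and use the orientability of $P_{v-w}$ and $P_{w-v}$ to pin down the band's attaching pattern: the gluing map must reverse the inherited boundary orientation on each attaching arc, which uniquely determines which side of the band connects which pair of endpoints. A direct trace of $\partial N_{v-w}$ then walks through the two $\ddd$-arcs lying in $L$, crosses via one side of the band into $R$, walks through the two $\ddd$-arcs in $R$, and returns via the other side of the band, yielding an explicit cyclic order on the four $\ddd_k$. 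Performing the analogous trace for $\partial N_{w-v}$ (now with $N$ cut horizontally and a band attached at $\partial w$), the $90^\circ$ symmetry of the model exchanging $v$ and $w$ produces the identical cyclic order. Hence the required homeomorphism $N_{w-v} \to N_{v-w}$ fixing each $\ddd_k$ pointwise exists; extending by the identity on $P \setminus \mathrm{int}(N)$ defines $\phi_{v,w}$.

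The main delicate point is the cyclic-order verification in step three: it rests on reading off the band's twist correctly from the orientability constraint, and a sign error there would produce opposite cyclic orders on the two replacements, in which case no homeomorphism fixing the $\ddd_k$ pointwise could exist. Once the orders are confirmed to match, the existence of $\phi_{v,w}$ follows from the standard uniqueness of a disk rel its marked boundary.
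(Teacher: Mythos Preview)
Your proposal is correct and follows essentially the same approach as the paper: localize to a neighborhood of $v \cup w$ and verify directly that the two local models match along the arcs where they meet the rest of $P$. The paper's proof consists entirely of a figure displaying the two local pieces side by side (with arrows marking the contact arcs you call $\ddd_k$), leaving the reader to see the homeomorphism; your reduction to ``both pieces are disks, so compare the cyclic order of the $\ddd_k$ on the boundary'' is a clean way to make that visual comparison rigorous. One small remark: your appeal to the $90^\circ$ symmetry does not by itself force the cyclic orders to agree---it only shows that the order on $\partial N_{w-v}$ is a cyclic shift of the one on $\partial N_{v-w}$---so it is your direct trace (giving the order $(\ddd_1,\ddd_2,\ddd_3,\ddd_4)$, which is invariant under such shifts) that actually closes the argument.
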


  \begin{figure}[tbh]
    \centering
    \includegraphics[scale=0.6]{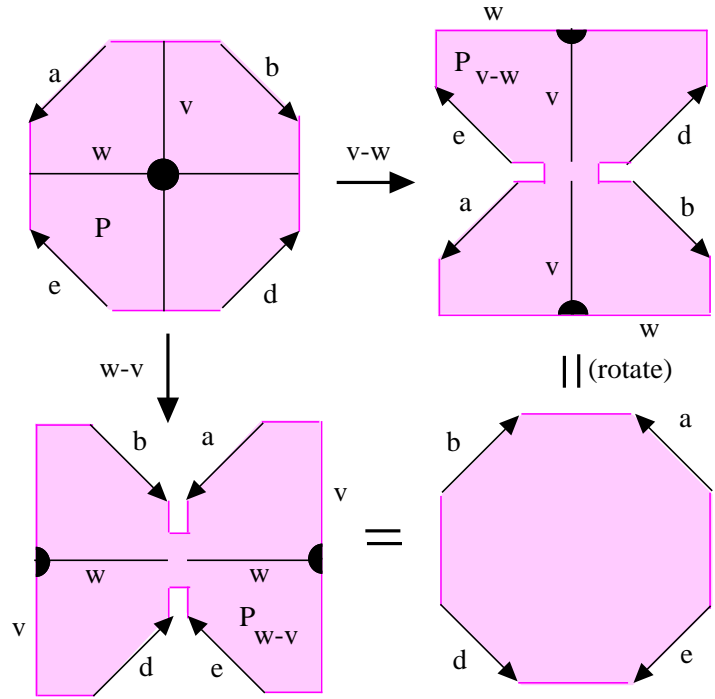}
    \caption{The arrows denote points of contact with the rest of the surface $P$.} \label{fig:Commute}
    \end{figure}

\begin{proof}  The proof is illustrated in Figure \ref{fig:Commute}.
\end{proof}

Suppose $\lambda$ is a properly embedded curve in $P$, in general position with respect to $w$ and disjoint from $v$.  Then $\lambda$ is unaffected by the operation that creates $P_{w - v}$.  This observation then provides a natural embedding $\lambda \subset P_{w - v}$. 

\begin{lemma} \label{lemma:Commute2} Let $P_+$ be the surface obtained from $P$ by adding a band to $P$ with its ends attached at the pair of points $\bdd v \subset \bdd P$.  Let $v_+$ be the circle in $P_+$ which is the union of $v$ and the core of the band.  Then $\phi_{v, w}(\lambda) \subset P_{v - w} \subset P_+$ is the curve obtained from $\lambda$ by band-summing $\lambda$ along $w$ to $v_+$.
\end{lemma}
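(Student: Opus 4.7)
The plan is to reduce the claim to a purely local verification inside a regular neighborhood $N = \eta(v \cup w) \subset P$. By Lemma \ref{lemma:Commute1} the homeomorphism $\phi_{v,w}$ is the identity outside of $N$, so $\phi_{v,w}(\lambda)$ agrees with $\lambda$ outside of $N$. On the other side of the identity, band-summing $\lambda$ to $v_+$ along $w$ is, by construction, performed inside an annular neighborhood of $v_+$ whose only interaction with $P$ itself takes place in a neighborhood of $w$; hence it too leaves $\lambda$ unchanged outside of $N$ (together with the band used to build $v_+$). Consequently it suffices to verify that the two prescriptions for modifying $\lambda \cap N$ agree up to isotopy rel $\bdd N$.

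Inside $N$, which is a disk containing the ``$+$'' shape formed by $v$ and $w$, the curve $\lambda$ consists of $k = |\lambda \cap w|$ disjoint proper arcs, each meeting $w$ transversally in a single point and disjoint from $v$. I would trace each such arc through the explicit homeomorphism $\phi_{v,w}$ depicted in Figure \ref{fig:Commute}: its two endpoints on $\bdd N$ are preserved, but the middle, which in $P_{w-v}$ crossed the (now banded) arc $w$, is pushed across the removed neighborhood of $w$ and must exit $N$ by running through the band attached at $\bdd v$. The result is $k$ arcs in $P_{v-w}$, each having the same endpoints on $\bdd N$ as the original, each disjoint from $w$, and each running once along a parallel copy of the core of the $v$-band (equivalently, crossing any fixed cospanning arc of the annular neighborhood of $v_+$ once).

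Finally, I would compare this with the band-sum description directly in $P_+$. Following Section \ref{sec:surfaces}, band-summing $\lambda$ to $v_+$ along $w$ takes place inside an annular neighborhood $A$ of $v_+$, of which $w$ is a spanning arc; it replaces each component of $\lambda \cap A$ (each crossing $w$ once and disjoint from $v_+$) by an arc with the same ends, disjoint from $w$ and $v_+$, that instead winds around $v_+$ through the attached band. This is exactly the picture produced by $\phi_{v,w}$ in the previous paragraph, so the two curves agree in $P_{v-w}$. The step I expect to be the main obstacle is not mathematical depth but the careful bookkeeping needed to match the two local pictures arc by arc, in particular verifying that the ``push through the $v$-band'' coming from $\phi_{v,w}$ produces exactly the winding prescribed by the band-sum rather than its inverse; this is ultimately settled by a direct inspection of Figure \ref{fig:Commute} together with the band-sum picture of Figure \ref{fig:Bandsum}.
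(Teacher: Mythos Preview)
Your proposal is correct and follows essentially the same approach as the paper: the paper's entire proof is the sentence ``The proof is illustrated in Figure~\ref{fig:Commutearcs},'' i.e.\ a local verification by picture, and your argument simply spells out in words the same local comparison inside $\eta(v\cup w)$ that that figure depicts.
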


  \begin{figure}[tbh]
    \centering
    \includegraphics[scale=0.6]{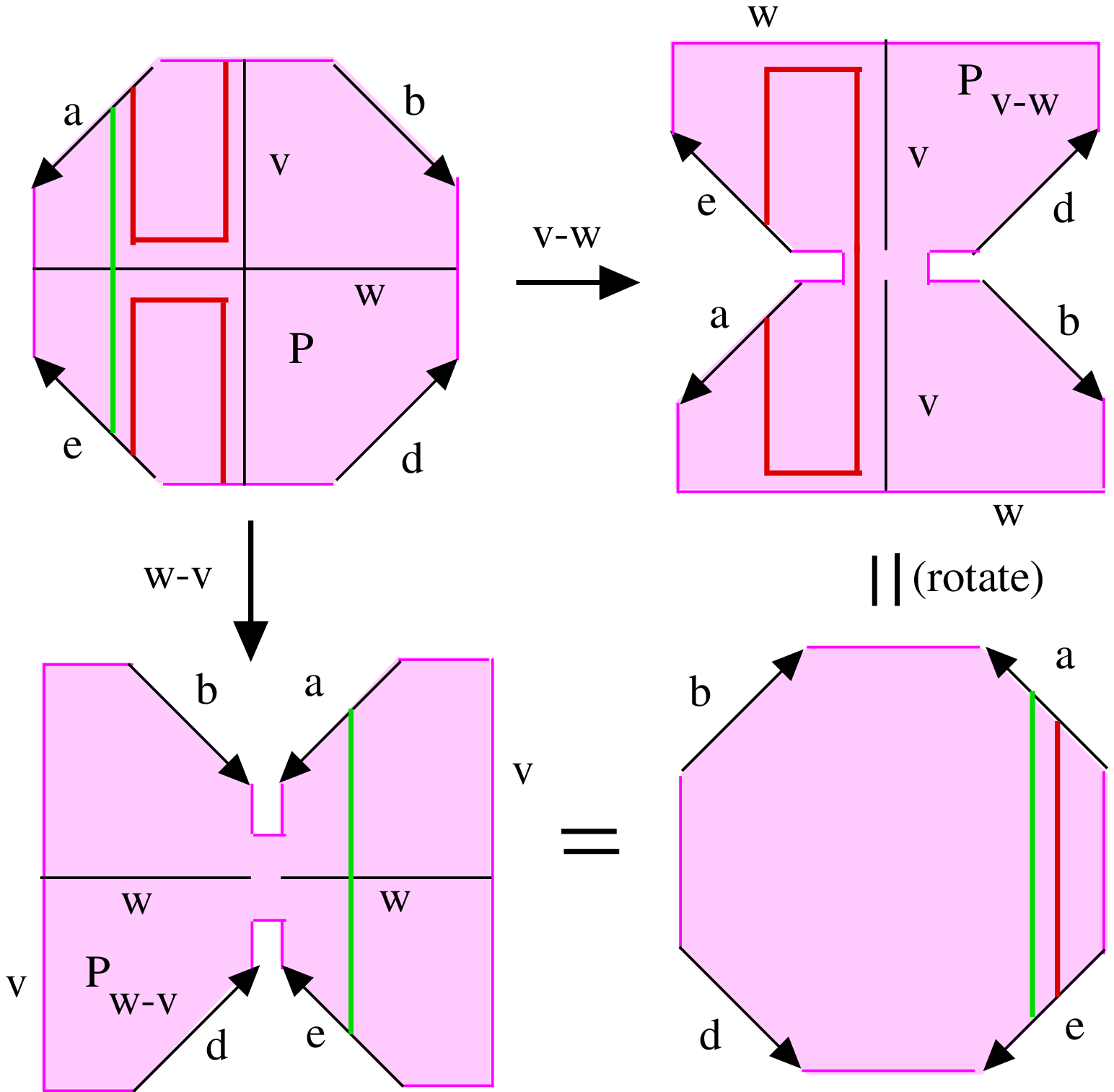}
    \caption{} \label{fig:Commutearcs}
    \end{figure}

\begin{proof}  The proof is illustrated in Figure \ref{fig:Commutearcs}.
\end{proof}

\section{The fundamental construction}

\begin{prop} \label{prop:fundamental} Suppose $M_+ = \cVp \cup_{S_+} \cWp$, $M_- = \cVm \cup_{S_-} \cWm$ are Heegaard splittings.  Suppose $P \subset S_+$ and $F \subset S_-$ are surfaces with respect to which a collection of disks $\oV \cup \oW$ and associated op-arcs $\cv \cup \cw$ is a stabilizing pair of coherently numbered coordinated forests of disks.  Suppose further that for some peripheral $(i, j)$ with $\rho_{\pm}(i, j) \neq 0$, $V_i \subset \cVp$ and $W_j \subset \cWm$ (or vice versa) and 
\begin{enumerate}
\item $\bdd V_i \cap P - v_i^+$ is disjoint from all op-arcs
\item $\bdd W_j \cap F - w_j^-$ is disjoint from all op-arcs
\item either $v_i^+$ or $w_j^+ \subset P$ is disjoint from all op-arcs and
\item either $v_i^-$ or $w_j^- \subset F$ is disjoint from all op-arcs.
\end{enumerate}
Then there are surfaces $P' \subset S_+$ and $F' \subset S_-$, with respect to which a collection of disks $\oV' \cup \oW'$ and associated op-arcs $\cv' \cup \cw'$ is a stabilizing coordinated pair of coherently numbered forests of disks.  Moreover, there are fewer disks in $\oV'$ than in $\oV$ and fewer disks in $\oW'$ than in $\oW$.
\end{prop}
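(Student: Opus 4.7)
The first step is to extract the numerical content of the hypothesis. By Corollary~\ref{cor:periph}, $\sss_\pm(i,j) = \rho_\pm(i,j) \le 1$ at any peripheral pair; coordination forces $\sss_+(i,j) = \sss_-(i,j)$; and $\rho_\pm(i,j) \neq 0$ then pins down
\[
\rho_+(i,j)=\rho_-(i,j)=\sss_+(i,j)=\sss_-(i,j)=1.
\]
So $v_i^+$ meets $w_j^+$ in a single point of $P$ and $v_i^-$ meets $w_j^-$ in a single point of $F$. Let $V_k\subset\cVm$ and $W_\ell\subset\cWp$ be the parents of $V_i$ and $W_j$ in their respective trees; hypotheses (3) and (4) ensure that neither $v_i^\pm$ nor $w_j^\pm$ is itself an op-arc pair, so each single intersection is a transverse crossing of honest arcs.

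The plan is to delete the vertices $\aaa_i, \aaa_j$ from the forests, discard $V_i$ from $\oV$ and $W_j$ from $\oW$, and compensate by replacing $P$ and $F$ with new surfaces $P' = P_{v_i^+ - w_j^+}$ and $F' = F_{v_i^- - w_j^-}$ in the notation of Section~\ref{sec:surfaces}: add a band to $P$ at $\bdd v_i^+$ and remove a neighborhood of $w_j^+$, and symmetrically on $F$. By Lemmas~\ref{lemma:Commute1}--\ref{lemma:Commute2}, this surgery is equivalent to band-summing each residual arc of $\bdd\oV\cup\bdd\oW$ along $w_j^\pm$ to the circle obtained from $v_i^\pm$ by closing through the new band. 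Each child $V_{i'}$ of $V_i$ loses its parent edge and becomes a non-distinguished root; its root arc remains $v_{i'}^-\subset F'$, and its new op-arc pair in $P'$ is taken to be the pair of parallel copies of $v_{i'}^+$ lying on either side of the strip formerly occupied by $\bdd V_i\cap P$. The children of $W_j$ are handled symmetrically, and the remaining disks of $\oV,\oW$ persist unchanged but are reinterpreted with respect to $P',F'$ via the homeomorphism of Lemma~\ref{lemma:Commute1}.

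Given the candidate data $(P',F',\oV',\oW')$ with updated op-arcs, I verify the axioms in turn. Coherent numbering descends trivially because only the indices $i$ and $j$ become inactive. The stabilizing point $x_0\in\bdd V_0\cap\bdd W_0$ lies outside the region of modification and is preserved since $(i,j)\neq (0,0)$. Conditions (1)--(2) guarantee that no op-arc meets the segments of $\bdd V_i\cap P$ or $\bdd W_j\cap F$ being doubled, so the new op-arc pairs inherit the Separation and Parallelism Properties from the parallelism of the two copies and from the corresponding properties of the ambient arcs; the Ordering Property persists because the freshly minted op-arc pairs carry indices $i'>i$ and $j'>j$, for which Ordering was already in force. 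The Disk Property for $\hP'$ and $\hF'$ follows because building the new overpasses factors through building the old ones plus the commuting surface surgery, which preserves the property of being a disk.

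The main obstacle, as I see it, is verifying coordination: $\sss'_+(i',j') = \sss'_-(i',j')$ for every $(i',j')$. This is where the opposite-quadrant hypothesis ($V_i\subset\cVp$, $W_j\subset\cWm$) earns its keep. It forces the surgeries on $P$ and $F$ to be strictly parallel in the sense of Lemmas~\ref{lemma:Commute1}--\ref{lemma:Commute2}, so that band-summing along $w_j^+$ on the $+$-side is perfectly mirrored by band-summing along $w_j^-$ on the $-$-side. Because the original coordination provides $\sss_+(i,j) = \sss_-(i,j) = 1$, the two surgeries alter the $\sss_\pm$ tallies by symmetric amounts, and equality of the new pairings then follows entry by entry. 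Conditions (1)--(4) are invoked precisely to rule out asymmetric contamination by other op-arc pairs, which would otherwise disturb $\sss_+$ without a matching change in $\sss_-$ and thereby break coordination.
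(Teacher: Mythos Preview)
Your outline has the right shape but two genuine gaps.

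First, the construction of $P'$ and $F'$ is not what you describe. The surface $P' \subset S_+$ is obtained by adjoining to $P$ a full collar $\eta(\bdd V_i)$ (one band for \emph{each} arc of $\bdd V_i - P$, not a single band at $\bdd v_i^+$) and then deleting $\eta(w_j^+)$; the new op-arc pairs are the pieces of $\bdd\eta(\bdd V_i) \cap P$. The abstract surface $P_{v_i^+ - w_j^+}$ only appears later, as what one obtains from $P'$ \emph{after building the newly created overpasses}. More seriously, on the $F$-side the roles of $v$ and $w$ are \emph{swapped}: one adjoins $\eta(\bdd W_j)$ to $F$, deletes $\eta(v_i^-)$, and band-sums the $w_k^-$ along $v_i^-$ to $\bdd W_j$. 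Your ``band-summing along $w_j^-$ on the $-$-side'' is the wrong operation; without the swap, the orphaned arc $v_i^-$ is never removed from $F$ and the new root arc $w_j^-$ never acquires its matching op-arc pair in $P$. (Relatedly, hypotheses (3)--(4) do not force $w_j^+$ or $v_i^-$ to be single arcs; either may well be an op-arc pair, and the paper handles that case separately.)

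Second, the coordination argument is where the real content lies, and you have not supplied it. The paper proves the explicit formula
\[
\sss'_\pm(\ell,k) \equiv \sss_\pm(\ell,k) + \sss_\pm(\ell,j)\,\sss_\pm(i,k) \pmod 2
\]
via a case analysis on which of $v_i^+, w_j^+$ is disjoint from op-arcs (this is exactly where hypothesis (3) enters), tracking ground arcs versus op-ties through the band-sum and using a parity count on bigons to avoid having to first make the new curves intersect efficiently. Because the $P$-side and $F$-side constructions are genuinely different (one alters $v$-arcs, the other $w$-arcs), there is no direct symmetry between them; coordination follows only because each side independently satisfies this same mod-$2$ identity. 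Your sentence ``the two surgeries alter the $\sss_\pm$ tallies by symmetric amounts'' is a restatement of the desired conclusion, not an argument for it.
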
 

\begin{proof} We construct another stabilizing coordinated pair of coherently numbered forests of disks.   We describe the construction in $M_+$ and later note the effect of the symmetric construction in $M_-$.

Start with the surface $P'' \supset P$ that is the union of $P$ with a collar neighborhood $Y = \eta(\bdd V_i)$ of $\bdd V_i$ in $S_+$.  Since part of $\bdd V_i$ already lies in $P$, another way to view the construction of $P''$ from $P$ is to add to $P$ a band in $S_+ - P$ along each arc of $\bdd V_i - P \subset S_+$.  

We initially assume that $w_j^+$ is not a pair of op-arcs, but, like $v_i^+$,  just a single proper arc in $P$.  The arcs $v_i^+, w_j^+$ intersect in a single point, since by Corollary \ref{cor:periph}, $\rho_{\pm}(i, j) = 1.$  Since $(i, j)$ is peripheral, the arc $w_j^+$ may intersect other arcs $v_{\ell}^+$ but only if $\ell< i$.  
Band-sum all such $v_{\ell}^+$ along $w_j$ to  $\bdd V_i$ and call the result $v'^+_{\ell} \subset P''$.  If $v_{\ell}^+$ was on the boundary of a disk in $\oV$, tube-sum the disk to (copies of) $V_i$ to obtain a corresponding disk in $\oV'$.  If $v_{\ell}^+$ was a pair of op-arcs (so, by assumptions (1) and (3), $\bdd V_i$ is disjoint from all op-arcs $\cw$) then $v'^+_{\ell}$ is a pair of op-arcs in $P''$.  Although after this step $v'^+_{\ell}$ may not intersect all $w_j^+$ efficiently, it is straightforward to see that, when the pair $v'^+_{\ell}$ is isotoped in $P''$ to make all intersections efficient, the Separation, Parallel and Ordering Properties on the pair $v_{\ell}$ in $P$ induce the same properties on the pair $v'^+_{\ell}$ in $P''$.  New op-ties may have been introduced, each corresponding to an intersection point of some $w_k^+$ with an arc of $\bdd V_i \cap P$.  Now remove the original $V_i$ from the collection of disks and call the result $\oV'$.

After the operation described above, $w_j \subset P''$ is disjoint from all disks in $\oV'$ and from all op-arcs in $\cv'$.  Let $P' = P'' - \eta(w_j)$.  Augment the set of op-arcs $\cv'$ by adding the pairs of arcs $\bdd Y \cap P$, one pair $v'^+_k$ for each arc $v_k^+$ in $\bdd V_i \cap P - v_i^+$, and normally orient each $v'^+_k$ into $Y$.  The assumptions of the proposition guarantee that the new pair of op-arcs $v'^+_k$ is disjoint from all other op-arcs and it is easy to see from the construction that it satisfies the Separation and Parallel Properties.  See Figure \ref{fig:Peripheral}.

 \begin{figure}[tbh]
    \centering
    \includegraphics[scale=0.6]{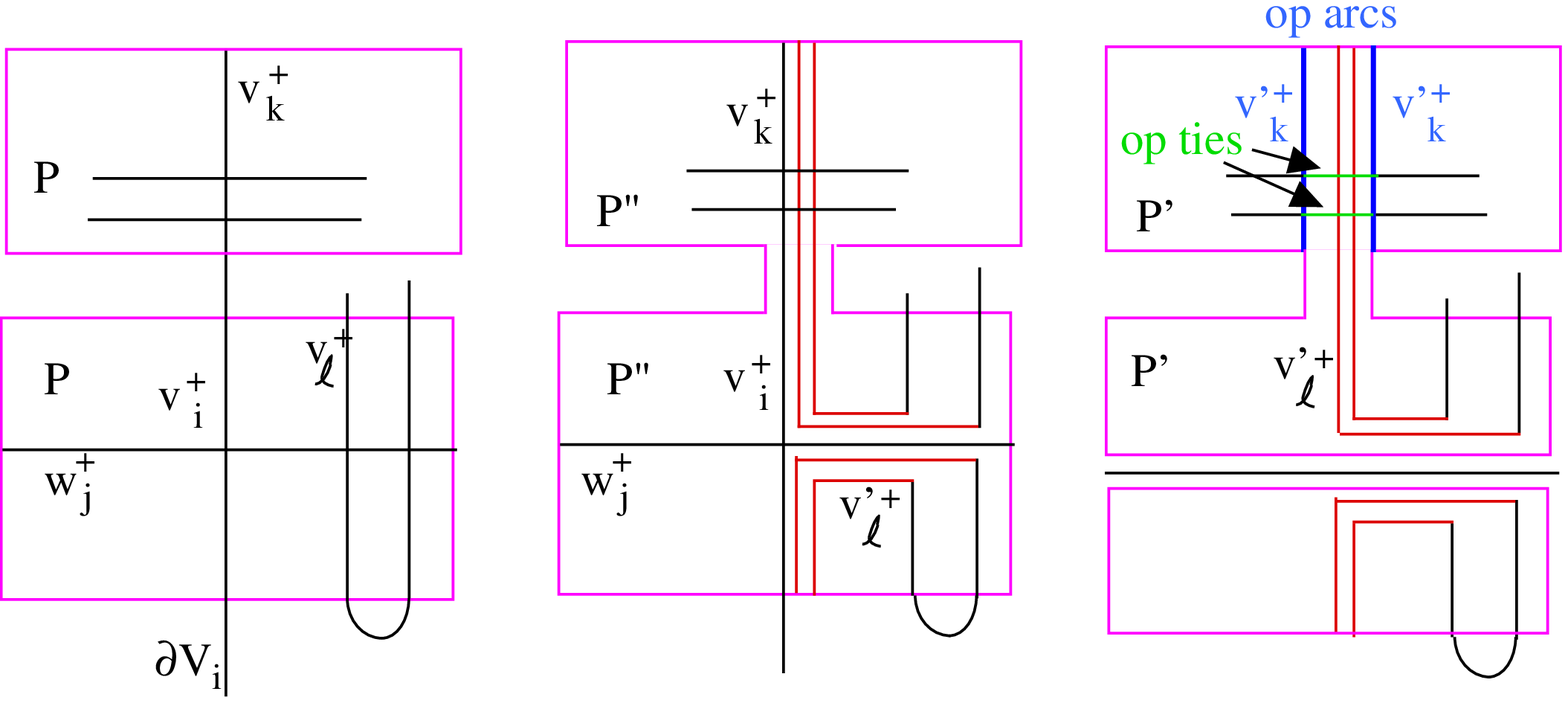}
    \caption{} \label{fig:Peripheral}
    \end{figure}

That such a pair of new op-arcs $v'^+_k$ satisfies the Order Property is only a little more difficult to show:  By the coherence of the numbering, $v_k^+ \subset \bdd V_i \cap P - v_i^+$ guarantees that $k > i$.  The interior of each op-tie of  the new pair of op-arcs $v'^+_k$ (corresponding to a point of $v_k^+ \cap \oW$) intersects only those $v'^+_{\ell}$ that have been band-summed to $v_i^+$ along $w_j^+$, that is only those for which $v_{\ell}^+ \cap w_j^+ \neq 0$.  Since $(i, j)$ is peripheral, this implies $\ell < i$, hence $\ell < k$, as required.  

If $w_j^+ = w_j^a \cup w_j^b$ is a pair of op-arcs, the construction is only slightly different.  By the Parallelism Property, points of intersection of $w_j^a$ with any $v_{\ell}^+$ are paired to points of intersection of $w_j^b$ by op-ties.  So the band summing described above, using say the component $w_j^a$, in fact removes (when the intersections are made efficient) all points of intersection between $\bdd \oV'$ and $w_j^b$ as well.  So then {\em both} op-arcs $w_j^a$ and $w_j^b$ end up disjoint from $\bdd \oV'$ and neighborhoods of both should be removed.  See Figure \ref{fig:Shield}.

 \begin{figure}[tbh]
    \centering
    \includegraphics[scale=0.6]{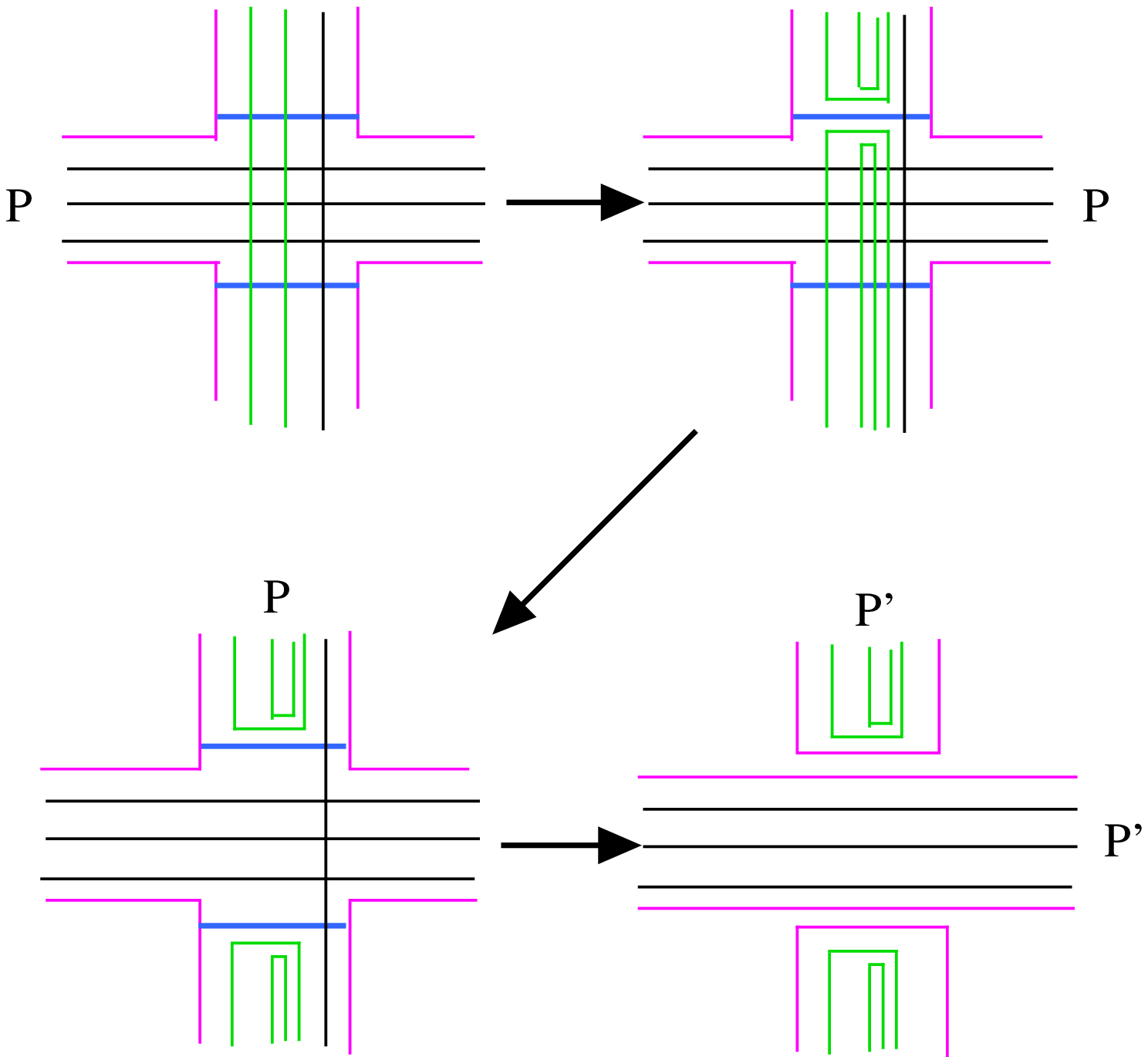}
    \caption{} \label{fig:Shield}
    \end{figure}

It is much more difficult to show that the new collection $\cv'$ of op-arcs still satisfies the Disk Property; that piece of the argument is postponed until later.  (See Corollary \ref{cor:Stilldisk}.)

\bigskip

What is the effect of the construction described above on the forest of trees?  Is the result a new pair of forests?  First of all, $w_j^+$ disappears, so, if $w_j^+$ is not a pair of op-arcs, and so lies on $\bdd W_h$ for some disk $W_h \subset \cWp$ with $h < j$, then $\bdd W_h$ has one less arc of intersection with $P'$.  Also, the disk $W_j \subset \cWm$ becomes the root of a tree with root arc $w_j^-$.  Secondly the entire disk $V_i^+$ disappears, so each disk (in $\cVm$) whose vertex, in the forest, was adjacent to $\aaa_i$ away from $v_i$, becomes a root in the resulting forest, a root associated to the new pairs of op-arcs that we have created.    But there are two immediately apparent defects: The arc or pair of op-arcs $v_i^- \subset F$ no longer has a matching arc $v_i^+ \subset P$, since $V_i$ has been removed.  
Also, $w_j^+$ has been removed, whereas $w_j^- \subset \bdd W_j$ remains as a root arc, violating the condition that each root arc in $F$ is coordinated with a pair of op-arcs in $P$.

Both defects are overcome by doing the symmetric operation in $M_-$ using now $\rho_-(i, j) = 1$.  That is, tube-sum disks in $\cWm$ along $v_i^-$ to $W_j$, alter $F$ by removing a neighborhood of $v_i^- \subset F$ (thereby fixing the first defect) and add to $F$ a neighborhood of the arcs $\bdd W_j^- - F$.  Then delete the disk $W_j$, fixing the second defect. See Figure \ref{fig:Peripheral2}.

 \begin{figure}[tbh]
    \centering
    \includegraphics[scale=0.6]{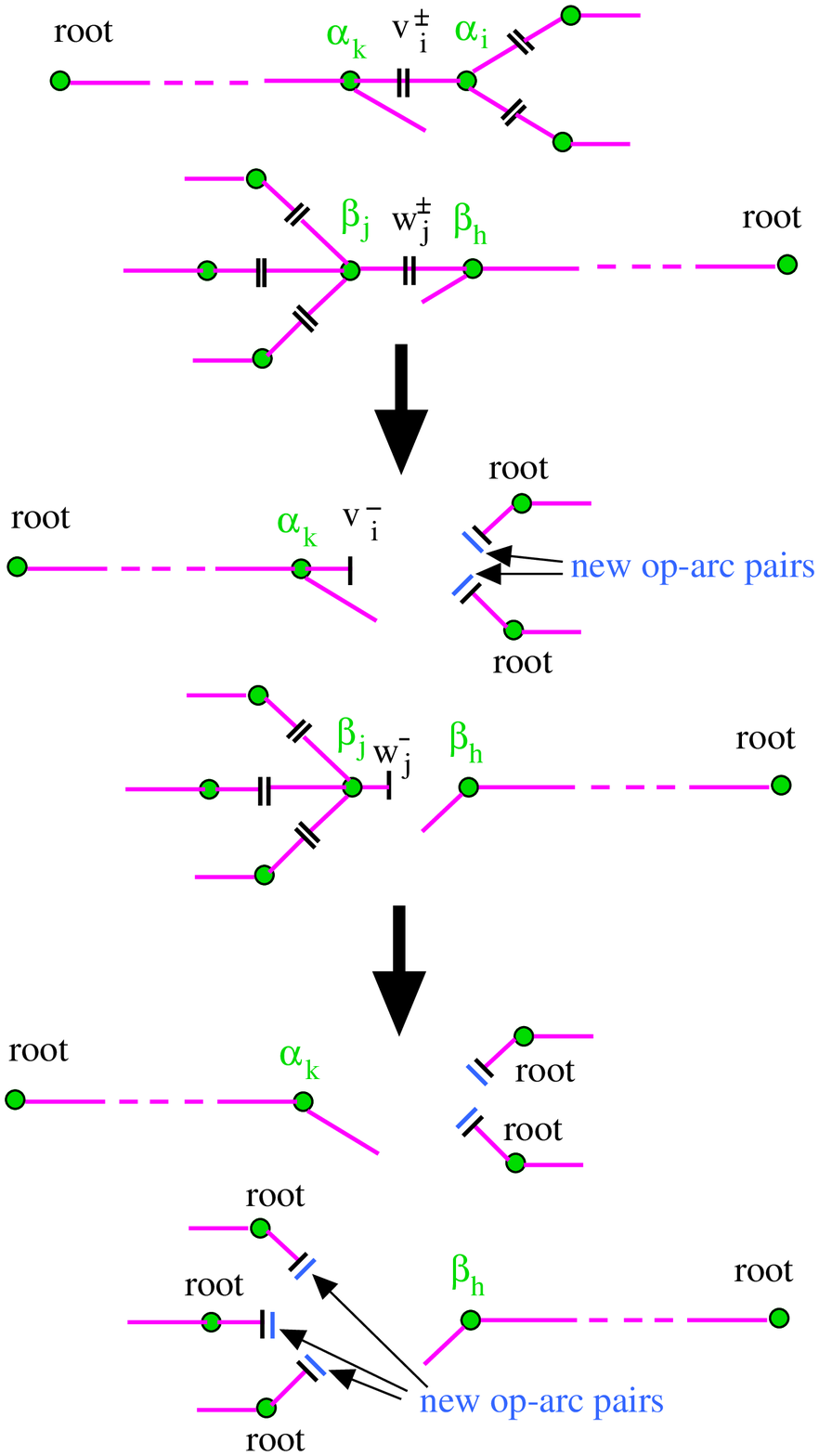}
    \caption{} \label{fig:Peripheral2}
    \end{figure}
    
We have shown that the new surfaces $P'$ and $F'$ and the new forests of disks satisfy all of the properties (except perhaps the Disk Property) of a coherently numbered stabilizing pair of forests of disks.  The new forests have fewer disks since the disks $V_i, W_j$ (corresponding to vertices $\aaa_i$ and $\bbb_j$ in the two forests) have been removed. 

We now assume that the new framework also satisfies the Disk Property (we will show this later) and verify that then the forests are coordinated.  That is, 

\begin{lemma} \label{lemma:ground}  For $\sss'_{\pm}$ the new pairings in $\hP'$ and $\hF'$ constructed as above and
for each $(\ell, k) \in \No \times \No$, $\sss'_+(\ell, k) = \sss'_-(\ell, k)$.
\end{lemma}

\begin{proof}  Since the initial forests are coordinated, the statement is true before the construction. So the proof consists of showing that the construction process does not alter the relationship.

Whether or not any of the arcs $v_i^{\pm}$ or $w_j^{\pm}$ are op-arcs, all disappear from our accounting by the end of the construction, so they are irrelevant to the question.  The focus is on other arcs, which may change during the construction.  The curves that are altered (as $P$ also is altered) in $S_+$ are the curves $v_{\ell}^+$ which intersect $w_j^+$; those altered (as $F$ also is altered) in $S_-$ are the curves $w_{k}^-$ which intersect $v_i^-$.  

\bigskip

By Lemma \ref{lemma:sigmaone}  each number is either $0$ or $1$, so it suffices to prove 

{\bf Claim:}  For all $\ell \neq i, k \neq j$ in $\No$, $$\sss'_{\pm}(\ell, k) \cong \sss_{\pm}(\ell, k) + \sss_{\pm}(\ell, j) \cdot \sss_{\pm}(i, k) \; mod \; 2.$$  

{\bf Proof of Claim:}  By symmetry, it suffices to show that this is true in $S_+$, that is for the intersection pairing $\sss'_{+}$ on arcs in $\hP'$.

Following Lemma \ref{lemma:efficient} there is a way to accurately calculate $\sss_+(\ell, k)$ in $P$.  An intersection point of $v_{\ell}^+$ with $w_k^+$ counts as a point in $\sss_+(\ell, k)$ if and only if the point is not on any overpass from either $\cv$ or $\cw$, that is the intersection point lies on a ground arc of both  $v_{\ell}^+$ and $w_k^+$.  Similarly,  {\em once $v'^+_{\ell}$ and $w'^+_k$ are isotoped rel boundary to intersect efficiently}, an intersection point of $v'^+_{\ell}$ with $w'^+_k$ counts as a point in $\sss'_+(\ell, k)$ if and only if the point is on ground arcs of both $v'^+_{\ell}$ and $w'^+_k$.  

Since, to prove the claim, we only have to determine the parity of $\sss'_{\pm}(\ell, k)$, the requirement that the arcs $v'^+_{\ell}$ and $w'^+_k$ first be isotoped to intersect efficiently turns out to be irrelevant, as we now demonstrate.  Two proper arcs in a surface can be isotoped to intersect efficiently by a sequence of isotopies, each removing a bigon of intersection.  So, to demonstrate that this process does not change the parity of intersection points between ground arcs in $v'^+_{\ell}$ and ground arcs of $w'^+_k$, it suffices to show that for any bigon $B$ in $P'$ between a subarc $\aaa$ of $v'^+_{\ell}$ and a subarc $\bbb$ of $w'^+_k$, either both end points of $\aaa$ lie in ground arcs of $v'^+_{\ell}$ or neither does (and symmetrically for $w'^+_k$).  It follows from the Separation Property that any subarc of $v'^+_{\ell}$ that has one end off an overpass and one end on must intersect the associated pair of op-arcs an odd number of times.  On the other hand, since the op-arcs $\cw$ are disjoint from $\bbb \subset w'^+_k$, any subarc of $\cw$ that lies in $B$ must have both ends on $\aaa$.  That is, $\aaa$ intersects each op-arc in $\cw$ an even number of times.  Hence if one end of $\aaa$ is on any overpass so is the other.

Since we do not have to make $v'^+_{\ell}$ and $w'^+_k$ intersect efficiently, we need only count (parity of) points of intersection as they are originally constructed.  We have already seen that the Ordering Property and the fact that $(i, j)$ is peripheral guarantees that the point $x = v_i^+ \cap w_j^+$ is a ground arc in both $v_i^+$ and $w_j^+$.  

\bigskip

{\bf Case 1:}  $v_i$ is disjoint from all pairs of op-arcs in $\cw$.

In this case, since $x$ is on a ground arc of $v_i$, all of $v_i$ is a ground arc. Suppose an intersection point $y \in v_{\ell}^+ \cap w_j^+$ is on an overpass associated to a pair of op-arcs $v_s^+$.  Then each of the pair of op-arcs $v_s^+$ also intersects $w_j^+$, namely at the ends of the op-tie on which $y$ lies.  When these three arcs ($v_{\ell}^+ \cup v_s^+$) are band-summed to $v_i^+$ the resulting subarc of $v'^+_{\ell}$ still lies entirely on the overpass associated to the new pair $v'^+_s$.  So whatever intersections of $v'^+_{\ell}$ with $w_k$ are created by this tube-summing do not count in $\sss'(\ell, k)$.  Hence in calculating how $\sss'_+(\ell, k)$ differs from $\sss_+(\ell, k)$ we can focus only on points of $v_{\ell}^+ \cap w_j^+$ that lie in ground arcs of $w_j^+$.  Similarly, we can focus only on points that lie in ground arcs of $v_{\ell}^+$ since if $y$ is on an op-tie for some pair of op-arcs $w_t^+$, band-summing $v_{\ell}^+$ near $y$ to $v_i^+$ only creates a much longer tie, since $v_i$ is disjoint from the op-arcs $w_t^+$; new points of intersection don't count in $\sss'_+(\ell, k)$.

So the only relevant change caused by the construction could come from band-summing $v_{\ell}^+$ near a point $z$ (unique, if it exists, by Lemma \ref{lemma:sigmaone}) in $v_{\ell}^+ \cap w_j^+$ that lies on the ground arc of both curves.  If no such point exists, then $\sss_+(\ell, j) = 0$ and the number of intersection points of $v'^+_{\ell} \cap w_k^+$ that lie in ground arcs of each is unchanged.  That is, $$\sss'_+(\ell, k) = \sss_+(\ell, k) = \sss_+(\ell, k) + 0 \cdot \sss_+(i, k) = \sss_+(\ell, k) + \sss_+(\ell, j) \cdot \sss_+(i, k)$$ as required.  If $z \in v_{\ell}^+ \cap w_j^+$ does lie on a ground arc of each, then the construction band-sums the ground arc of $v_{\ell}^+$ to $v_i^+$ at $x$.  It follows that the number of intersection points of ground arcs of $v'^+_{\ell}$ with ground arcs of $w_k^+$ is increased by $\sss_+(i, k)$ (before $v'^+_{\ell}$ is isotoped to have efficient intersection with $w_k^+$).  That is,
 $$\sss'_+(\ell, k) = \sss_+(\ell, k) + 1 \cdot \sss_+(i, k) = \sss_+(\ell, k) + \sss_+(\ell, j) \cdot \sss_+(i, k) \; mod \; 2$$ as required.
 
 \bigskip
 
 {\bf Case 2:} $w_j$ is disjoint from all pairs of op-arcs in $\cv$

The proof is quite analogous to Case 1.  Here, since $x$ is on a ground arc of $w_j$, all of $w_j$ is a ground arc.  If $y \in v_{\ell}^+ \cap w_j^+$ is not on a ground arc of $v_{\ell}^+$, consider the op-tie in $v_{\ell}^+$ on which $y$ lies, say for a pair of op-arcs $w_t^+$.  Observe first the subtle fact that $w_t^+$ must be disjoint from $v_i^+$.  For if it weren't, there would be an op-tie for $w_t^+$ contained in $v_i^+$, and by the Parallelism Property that op-tie also must intersect $w_j^+$ and so $\rho_+(i, j) \geq 2$, contradicting Corollary \ref{cor:periph}.  It follows then that when the op-tie in $v_{\ell}^+$ containing $y$ is band-summed to $v_i^+$, the resulting arc becomes an op-tie in $v'^+_{\ell}$ for the pair of op arcs $w_t^+$.  Thus none of the new points introduced affects $\sss'(\ell, k)$.  So, as in Case 1, we need only focus on the point $z$ (unique, if it exists) at which a ground arc of $v_{\ell}^+$ intersects $w_j^+$.  The rest of the argument is essentially the same as in Case 1.  This proves the Claim, and so (assuming the Disk Property is preserved by the construction) Lemma  \ref{lemma:ground} and with it Proposition \ref{prop:fundamental}.  
\end{proof}  \end{proof}

\section{The Disk Property is preserved}

We want to understand how the fundamental construction, described in the proof of Proposition \ref{prop:fundamental} above, that changes $P$ to $P'$ affects the topology of the surfaces $\hP$ and $\hP'$ obtained by building all overpasses in $P$ and $P'$.  The operation $P \to P_{v - w}$ described in Section \ref{sec:surfaces} plays a role:

\begin{lemma}  $\hP' = \hP_{\hv^+_i - \hw^+_j}$.
\end{lemma}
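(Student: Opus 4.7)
The plan is to track the construction of $\hP'$ by first performing those overpass-building steps that are unaffected by the fundamental construction, then comparing with $\hP$. By the hypotheses of Proposition~\ref{prop:fundamental}, all old op-arc pairs in $\cv \cup \cw$ other than (possibly) $w_j^+$ are disjoint from $\bdd V_i \cap P - v_i^+$ (assumption 1), and either $v_i^+$ or $w_j^+$ is disjoint from all op-arcs (assumption 3). Hence building the overpass at any such old op-arc pair is a local move supported away from the region affected by the fundamental construction, and therefore commutes with it. I build these overpasses first, both in $P$ and in $P'$, to obtain intermediate surfaces $\tilde P$ and $\tilde P'$. Then $\hP$ is $\tilde P$ with the overpass at $w_j^+$ built (if $w_j^+$ is an op-arc pair), while $\hP'$ is $\tilde P'$ with the overpasses at $\{v'^+_{k_l}\}$ built. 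Essentially by construction, $\tilde P' = \tilde P \cup \bigcup_l \eta(a_l) - \eta(w_j^+)$, where the $a_l$ are the components of $\bdd V_i - P$.

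I next claim that on the main component, removing $\eta(w_j^+)$ from $\tilde P$ is equivalent to first building the overpass at $w_j^+$ (obtaining $\hP$) and then removing $\eta(\hw^+_j)$. When $w_j^+$ is a single arc this is automatic, since $\hw^+_j$ then agrees with $w_j^+$ on the main component. When $w_j^+ = w_j^a \cup w_j^b$ is an op-arc pair, the overpass-building identifies the outside copies of $w_j^a,w_j^b$ on $\tilde P$'s main side into the single arc $\hw^+_j$, and removing a neighborhood of $\hw^+_j$ has the same effect on the main component as directly removing $\eta(w_j^a) \cup \eta(w_j^b)$; the overpass region that is separated off contains only unindexed op-ties and is topologically discarded by the Disk Property.

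The remaining and most delicate step is the key geometric observation that adding the collar $Y = \eta(\bdd V_i)$ to $\tilde P$ and then building the new overpasses at $\{v'^+_{k_l}\}$ is equivalent to attaching to $\tilde P$ a single band with ends at the two points of $\bdd v_i^+ = \bdd \hv^+_i \subset \bdd \tilde P$. The collar $Y$ is an annulus attached to $\tilde P$ along the long sides of the rectangles $\eta(v^+_\ast)$, one for each $v^+_\ast \in \bdd V_i \cap P$. Each new overpass at $v'^+_{k_l}$ (the pair of long sides of $\eta(v_{k_l}^+)$, normally oriented into $Y$) is built by cutting along the pair and identifying the outside copies on $\tilde P$'s side. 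After performing this at every $v_{k_l}^+ \in \bdd V_i \cap P - v_i^+$, the collar $Y$ remains attached to $\tilde P$ only along the long sides of $\eta(v_i^+)$, and amalgamating the cut-and-reglued pieces of $Y$ with the identified outside copies simplifies to a single band spanning the two endpoints of $v_i^+$ on $\bdd P$. This is the hard part: it requires careful bookkeeping of how the annular collar deforms under the simultaneous cut-and-regluings, and is the place where the reduction from an annulus to a band (with both ends at $\bdd \hv^+_i$) is the actual content of the lemma.

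Combining these steps, $\hP'$ is exactly $\hP$ with a band added at $\bdd \hv^+_i$ and a neighborhood of $\hw^+_j$ removed, which is the definition of $\hP_{\hv^+_i - \hw^+_j}$.
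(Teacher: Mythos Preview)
Your argument has a genuine gap in the case corresponding to the paper's Case~3, namely when assumption~(3) holds because $v_i^+$ (rather than $w_j^+$) is disjoint from all op-arcs. In that situation nothing prevents old op-arc pairs $\cv_\ell \subset \cv$ from intersecting $w_j^+$. The fundamental construction then band-sums each such $\cv_\ell$ along $w_j^+$ to $\bdd V_i$, producing a \emph{different} pair of op-arcs $v'^+_\ell$ in $P'$. So your claim that ``building the overpass at any such old op-arc pair is a local move supported away from the region affected by the fundamental construction, and therefore commutes with it'' is false here: the overpass you must build in $P'$ is for $v'^+_\ell$, not for $\cv_\ell$, and these differ exactly in the region near $w_j^+$ where the construction acts. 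Your intermediate surfaces $\tilde P$ and $\tilde P'$ are therefore not related in the way you assert.

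This is not a minor bookkeeping issue; it is where the real content of the lemma lies. The paper handles it by invoking Lemmas~\ref{lemma:Commute1} and~\ref{lemma:Commute2}: after building the new overpasses (turning $P'$ into $P_{v_i^+ - w_j^+}$), the rerouted old op-arcs are exactly the images under the homeomorphism $\phi_{v_i^+,w_j^+}: P_{w_j^+ - v_i^+} \to P_{v_i^+ - w_j^+}$ of the \emph{original} op-arcs sitting undisturbed in $P_{w_j^+ - v_i^+}$. One can then build the old overpasses in $P_{w_j^+ - v_i^+}$ (where they genuinely are supported away from the construction) to get $\hP_{\hw_j^+ - \hv_i^+}$, and Lemma~\ref{lemma:Commute1} gives $\hP_{\hw_j^+ - \hv_i^+} \cong \hP_{\hv_i^+ - \hw_j^+}$. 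Your proof is essentially correct for the paper's Cases~1 and~2 (where $w_j^+$ is disjoint from $\cv$), but you need this additional argument to complete Case~3.
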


\begin{proof} The first observation is this:  The (abstract) surface obtained from $P'$ by building exactly those overpasses that are newly created in $P'$ is simply $P_{v_i^+ - w_j^+}$.  This is immediate from the description: when the overpass is built for the overpass corresponding to an arc $v_{i'}^+$  of $\bdd V_i \cap P - v_i^+$, the effect on the topology of $P'$ is as if $v_{i'}^+ \subset \bdd V_i$ were simply disjoint from $P$.  Apply that logic to every component of $\bdd V_i \cap P - v_i^+$, and so to every newly created overpass, and the effect is as if the entire arc $\bdd V_i - v_i^+$ were disjoint from $P$.  That is, once all  the new overpasses are built, it is as if a single band were attached to $P$ with core the arc $\bdd V_i - v_i^+$, and then the arc $w$ is deleted.  This is the same description as the surface $P_{v_i^+ - w_j^+}$.

{\bf Case 1:} $w_j^+$ is disjoint from all op-arcs $\cv$ and is a simple arc (not a pair of op-arcs).

$\hP'$ is obtained from $P'$ by building all overpasses.  Build the new overpasses first, changing $P'$ to $P_{v_i^+ - w_j^+}$.  Since all the remaining op-arcs are unaffected by removing $w_j^+$ they persist into $P_{v_i^+ - w_j^+}$ and $\hP'$ can be viewed as the result of building the overpasses in $P_{v_i^+ - w_j^+}$.  By the hypothesis of this case, none of the old overpasses goes through the band attached at the ends of $v_i^+$ so we may as well attach it, and remove $w_j^+$ {\em after} building the old overpasses.  But this is equivalent to first creating $\hP$ (by building the old overpasses) then attaching the band to the ends of what was $v_i^+$ and is now $\hv^+_i$ and then removing $w_j^+ = \hw^+_j$.

{\bf Case 2:} $w_j^+$ is a pair of op-arcs and so is disjoint from $\cv$.

The argument is much the same as Case 1, but requires a preliminary move:  before launching the argument above, first build the overpass corresponding to $w_j^+$, creating a surface $P_j$ that plays the role of $P$ in Case 1.  $w_j^+$ becomes a single arc $w_j$ in $P_j$ intersecting $v_i^+$ in a single point and removing $w_j$ from $P_j$ gives the same surface as removing both of the original op-arcs $w_j^+$ from $P$.

{\bf Case 3:} $v_i^+$ is disjoint from all op-arcs $\cw$.

The important difference from Case 1 is that here the op-arcs $\cv$ may intersect $w_j^+$ in $P$; during the construction of $P'$ they are rerouted.  Begin the construction the same as in Case 1: build all new overpasses, so that $P'$ becomes $P_{v_i^+ - w_j^+}$.  The old op-arcs that previously intersected $w_j^+$ are rerouted through the new band via the same operation that is described in Lemma \ref{lemma:Commute2}.  So, according to that Lemma, an equivalent way of viewing the surface at this point would have been to construct  $P_{ w_j^+ - v_i^+}$, leaving the op-arcs where they are, disjoint from $v_i^+$ and then apply $\phi_{v_i^+, w_j^-}$.  Then the argument of Case 1 applied to $P_{ w_j^+ - v_i^+}$ shows that $\hP' = \hP_{ \hw_j^+ - \hv_i^+}$ and Lemma \ref{lemma:Commute1} shows that $\hP_{ \hw_j^+ - \hv_i^+} \cong \hP_{\hv^+_i - \hw^+_j}$. 
\end{proof}

\begin{cor} \label{cor:Stilldisk} $\hP'$ is a disk.
\end{cor}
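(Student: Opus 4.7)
The plan is to read Corollary \ref{cor:Stilldisk} as essentially an immediate consequence of the lemma just proved, $\hP' = \hP_{\hv^+_i - \hw^+_j}$, plus a simple topological fact about how the operation $Q \mapsto Q_{\aaa - \bbb}$ acts on a disk $Q$ when $\aaa$ and $\bbb$ are proper arcs meeting in a single point. So my first step is to identify $\hP'$ with $\hP_{\hv^+_i - \hw^+_j}$ via the previous lemma and thereafter work entirely in the abstract disk $\hP$ guaranteed by the Disk Property.

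Next I would check the hypothesis that $\hv^+_i$ and $\hw^+_j$ meet in exactly one point of $\hP$. Since $(i,j)$ is peripheral and $\rho_{\pm}(i,j)\neq 0$ by the hypothesis of Proposition \ref{prop:fundamental}, Corollary \ref{cor:periph} gives $\sss_{+}(i,j) = \rho_{+}(i,j)$, and Lemma \ref{lemma:sigmaone} bounds this common value by $1$, forcing $|\hv^+_i\cap \hw^+_j| = \sss_{+}(i,j) = 1$. By Lemma \ref{lemma:efficient} they already meet efficiently, so this intersection count is geometric.

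The heart of the argument is then the following purely topological observation, which I would either prove in a line or just draw: if $Q$ is a disk and $\aaa,\bbb$ are properly embedded arcs with $|\aaa\cap\bbb|=1$, then $Q_{\aaa-\bbb}$ is again a disk. Indeed, attaching a band to $Q$ along $\bdd\aaa$ produces a surface of Euler characteristic $0$ with boundary; since $Q$ is orientable and the band is attached to a single boundary circle at two points, the result is an annulus $A$, and the circle $\aaa_+$ (the union of $\aaa$ and the core of the band) is a core of $A$. The arc $\bbb$ sits unchanged in $A$, has both endpoints on $\bdd Q \subset \bdd A$, and meets the core $\aaa_+$ transversely in a single point, so $\bbb$ is a spanning arc of the annulus $A$. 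Removing an open neighborhood of a spanning arc from an annulus yields a disk, which is precisely $Q_{\aaa-\bbb}$.

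I expect no real obstacle; the only thing to be careful about is Case 3 of the preceding lemma, where the identification $\hP'\cong \hP_{\hv^+_i-\hw^+_j}$ involves the homeomorphism $\phi_{\hv^+_i,\hw^+_j}$ coming from Lemma \ref{lemma:Commute1}, so one should make sure the intersection count $|\hv^+_i\cap\hw^+_j|=1$ is read in $\hP$ (where it follows from Corollary \ref{cor:periph}) and not after rerouting. With that caveat the corollary follows directly, and along with Lemma \ref{lemma:ground} this completes the proof of Proposition \ref{prop:fundamental}.
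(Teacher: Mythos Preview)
Your argument is correct and follows the same route as the paper: invoke the preceding lemma to identify $\hP'$ with $\hP_{\hv^+_i - \hw^+_j}$, use the Disk Property for $\hP$, and then appeal to the elementary fact that $D_{v-w}$ is a disk whenever $D$ is a disk and $v,w$ are proper arcs meeting once. The paper's proof is a single sentence asserting exactly this; you have simply supplied the verification that $|\hv^+_i\cap\hw^+_j|=1$ and the annulus argument for the topological fact, both of which the paper leaves implicit.
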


\begin{proof}  We are given before the construction that $\hP$ is a disk, and for $v, w$ any two properly embedded arcs in a disk $D$ that intersect in a point, $D_{v - w}$ is a disk. 
\end{proof}

\section{Dropping symmetry: a combinatorial proof of the Gordon Conjecture}

\begin{prop} \label{prop:asymmetry} Suppose $M_+ = \cVp \cup_{S_+} \cWp$ and $M_- = \cVm \cup_{S_-} \cWm$ are Heegaard splittings.  Suppose collections of disks $\oV, \oW$ and associated op-arcs $\cv,  \cw$ is a stabilizing pair of coherently numbered coordinated forests of disks for surfaces $P \subset S_+$ and $F \subset S_-$.  Suppose further that there is a peripheral $(i, j)$ with  $\rho_{\pm}(i, j) \neq 0$ and that all op-arcs $\cv$ are disjoint from all arcs $\{ w_k^{\pm} \}$ in both $F$ and $P$.  (Note: but {\em not} symmetrically: That is, op-arcs in $\cw$ may intersect $\{ v_i^{\pm} \}$.)

If neither $M_+ = \cVp \cup_{S_+} \cWp$ nor $M_- = \cVm \cup_{S_-} \cWm$ is stabilized then there are surfaces $P' \subset S_+$, $F' \subset S_-$, and collections of disks $\oV', \oW'$ and associated op-arcs $\cv', \cw'$ so that
\begin{itemize}
\item $\oV', \oW'$ and associated op-arcs $\cv', \cw'$ is a stabilizing pair of coherently numbered coordinated forests of disks with respect to $P'$ and $F'$ and
\item there are fewer disks in $\oV'$ than in $\oV$ and fewer disks in $\oW'$ than in $\oW$ and 
\item all op-arcs $\cv'$ are disjoint from all arcs $\{ w'^+_{\pm} \}$ in both $F'$ and $P'$.
\end{itemize}
\end{prop}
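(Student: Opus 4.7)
Since $(i,j)$ is peripheral with $\rho_{\pm}(i,j) \neq 0$, Corollary~\ref{cor:periph} and the coordination hypothesis give $\rho_+(i,j) = \rho_-(i,j) = 1$. If $V_i$ and $W_j$ lay on the same side of their Heegaard splittings, Lemma~\ref{lemma:terminate} would force one of $M_{\pm}$ to be stabilized, contradicting the hypothesis. Hence $V_i$ and $W_j$ lie on opposite sides; without loss of generality, $V_i \subset \cVp$ and $W_j \subset \cWm$. This places the data in the setup required by Proposition~\ref{prop:fundamental}, which I plan to invoke.

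Combining the asymmetric disjointness with Definition~\ref{defin:forests}(3) (which asserts that each op-arc family is disjoint from the boundary of its own forest of disks), every arc among $\{w_k^{\pm}\}$ is disjoint both from $\cv$ (by hypothesis) and from $\cw$. Applied to $\bdd W_j \cap F - w_j^-$, $w_j^+$, and $w_j^-$, this delivers hypotheses (2), (3), and (4) of Proposition~\ref{prop:fundamental}. Only hypothesis (1) is potentially problematic: the arcs of $\bdd V_i \cap P - v_i^+$ are disjoint from $\cv$ by Definition~\ref{defin:forests}(3), but $\cw$-op-arcs may cross them, since no hypothesis forbids this.

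The main obstacle, and the most delicate step, is handling a possible failure of (1) while ensuring that the asymmetric disjointness of $\cv'$ from every $w$-arc persists in the output. My plan is first to perform a preliminary isotopy in $P$ pushing each offending $\cw$-op-arc off $\bdd V_i - v_i^+$; this is permitted because $\cv$ is already disjoint from every $w$-arc and so does not obstruct the isotopy, and because the peripherality of $(i,j)$ together with the Parallelism and Separation Properties of op-arcs localizes the offending intersections into innermost bigons that can be removed without spoiling the Disk Property. Once (1) holds, Proposition~\ref{prop:fundamental} applies and yields new surfaces $P', F'$, smaller forests $\oV', \oW'$, and op-arcs $\cv', \cw'$. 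The new members of $\cv'$ — the band-summed arcs $v_{\ell}^{\prime +}$ and the collar pair $\bdd Y \cap P$ — both follow $\bdd V_i$, which has been made disjoint from $\cw$, and the old members of $\cv$ that survive are still disjoint from every $w$-arc by the original hypothesis. Thus the asymmetric disjointness is preserved, and the remaining conclusions — strictly fewer disks in each forest, coordination via Lemma~\ref{lemma:ground}, and the other stabilizing-forest properties — then follow from Proposition~\ref{prop:fundamental} and its proof.
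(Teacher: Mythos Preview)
There is a genuine gap: you work with the given peripheral pair $(i,j)$, whereas the paper discards it and instead chooses $(i,j)$ with $i$ \emph{maximal} among all pairs having $\rho_\pm(i,j)\neq 0$.  That single choice is what delivers both hypothesis (1) of Proposition~\ref{prop:fundamental} and the third bullet of the conclusion for free, because maximality of $i$ forces $\rho_\pm(i',\cdot)\equiv 0$ for every $i'>i$, so each arc $v_{i'}^+\subset\bdd V_i\cap P - v_i^+$ is already disjoint from \emph{every} $w_k^+$.

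Your isotopy plan for (1) cannot succeed as stated: all arcs in $P$ already intersect efficiently by convention, so there are no bigons to push across; if some op-arc pair $\cw_k$ meets an arc $v_{i'}^+\subset\bdd V_i\cap P - v_i^+$, that intersection is essential and records $\rho_+(i',k)>0$, and peripherality of $(i,j)$ only controls pairs with second coordinate $\geq j$, so it says nothing when $k<j$.  Even granting (1), your check of the third bullet is incomplete.  The new op-arcs added to $\cv'$ are the collar pairs $\bdd Y\cap P$, one for each $v_{i'}^+\subset\bdd V_i\cap P - v_i^+$, and the conclusion requires them to miss \emph{all} $w$-arcs, not merely $\cw$.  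You only arranged $\bdd V_i$ to miss $\cw$; a non-op arc $w_k^+$ may still cross some $v_{i'}^+$, and then the corresponding new op-arc pair meets $w_k^+$, destroying exactly the asymmetric disjointness you must preserve.
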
 

\begin{proof}  Among all $(i, j)$ with $\rho_{\pm}(i, j) \neq 0$ choose that in which $i$ is maximal.  If $V_i$ and $W_j$ both lie in the same manifold, say $M_+$ then Lemma \ref{lemma:terminate} and Corollary \ref{cor:periph} show that the splitting of $M_+$ is stabilized.  So henceforth we assume, with no loss of generality, that  $V_i \subset \cVp$ and $W_j \subset \cWm$.

Since $i$ was chosen to be maximal among non-trivial peripheral pairs $(i, j)$, each arc $v^+_{i'} \subset \bdd V_i \cap P - v_i^+$ is disjoint from all arcs $\{ w_k^+ \}$, else a maximal $k$ with non-trivial intersection would be a peripheral pair with $i' > i$.  It follows that the first requirement of Proposition \ref{prop:fundamental}, namely that $\bdd V_i \cap P - v_i^+$ is disjoint from all op-arcs, is satisfied.  All three other requirements are trivially satisfied, since any $\cv$ is disjoint from all all arcs $\{ w'^+_k \}$.  Hence we can apply the fundamental construction to the pair of disks $V_i$ and $W_j$ as was done in the proof of Proposition \ref{prop:fundamental}.  New op-arcs are created in $\cv'$, one pair for each arc $v^+_{i'}$ of $\bdd V_i \cap P - v_i^+$.  But we have observed above that our choice of $i$ guarantees that each of these will be disjoint from all arcs $\{ w_k^+ \}$, as required.
\end{proof}

\begin{thm} If $\cV \cup_S \cW$ is stabilized either $\cVp \cup_{S_+} \cWp$ or $\cVm \cup_{S_-} \cWm$ is stabilized.
\end{thm}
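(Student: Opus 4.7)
The plan is to induct on $|\oV| + |\oW|$, starting from the Seminal Example and repeatedly shrinking the forests via Proposition \ref{prop:asymmetry}. First, given the stabilizing disks $V \subset \cV, W \subset \cW$ for $\cV \cup_S \cW$ meeting at the single point $x_0$, the Seminal Example of Section \ref{sec:forests2} produces an initial stabilizing coordinated pair of coherently numbered forests $(\oV, \oW)$ in $(M_+, M_-)$ with $\cv = \cw = \emptyset$. In particular, the asymmetric hypothesis of Proposition \ref{prop:asymmetry}, that all op-arcs $\cv$ are disjoint from the arcs $\{w_k^{\pm}\}$, is vacuously satisfied, so the induction can start.

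For the base case, suppose no peripheral $(i,j) \in \No \times \No$ satisfies $\rho_{\pm}(i,j) \neq 0$. If the joint support of $\rho_+$ and $\rho_-$ in $\No \times \No$ were nonempty, then being finite it would contain a componentwise-maximal element, which is automatically peripheral and would carry $\rho_{\pm} \neq 0$; so by hypothesis the support is empty and $\rho_{\pm} \equiv 0$ on $\No \times \No$. Thus the arcs in $\bdd \oV \cap (P \cup F)$ and $\bdd \oW \cap (P \cup F)$ may be isotoped (rel $\bdd(P \cup F)$) to be disjoint, whereupon the only intersection of $\bdd \oV$ with $\bdd \oW$ is $x_0 \in \bdd V_0 \cap \bdd W_0$. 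Since $x_0 \in S_\epsilon - (F \cup P)$ for some sign $\epsilon$, both $V_0 \subset \cV_\epsilon$ and $W_0 \subset \cW_\epsilon$, and together they stabilize $M_\epsilon$.

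For the inductive step, pick a peripheral $(i,j)$ with $\rho_{\pm}(i,j) \neq 0$ and $i$ maximal. By Corollary \ref{cor:periph} together with the coordinated hypothesis, $\rho_+(i,j) = \rho_-(i,j) = 1$. If $V_i$ and $W_j$ lie on the same side of the connected sum, Lemma \ref{lemma:terminate} exhibits that summand as stabilized. Otherwise $V_i$ and $W_j$ lie on opposite sides, and Proposition \ref{prop:asymmetry} produces a new stabilizing coordinated pair $(\oV', \oW')$ with strictly fewer disks in each forest and with the op-arc disjointness hypothesis still in force; the inductive hypothesis finishes the argument.

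The main obstacle is not the induction skeleton but the verification that every hypothesis of Proposition \ref{prop:asymmetry} genuinely propagates under the fundamental construction: the Disk Property for the new $\hP'$ and $\hF'$, the coordinated equality $\sss'_+ = \sss'_-$, and the asymmetric disjointness of $\cv'$ from all $\{w'^{\pm}_k\}$. These are delivered respectively by Corollary \ref{cor:Stilldisk}, Lemma \ref{lemma:ground}, and the explicit observation in the proof of Proposition \ref{prop:asymmetry} that maximality of $i$ forces each newly-created pair of op-arcs $v'^+_{i'}$ to be disjoint from every $w_k^+$. Once these are in hand the induction terminates, since $|\oV| + |\oW|$ is finite and strictly decreases at each step.
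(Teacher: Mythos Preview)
Your proof is correct and follows essentially the same approach as the paper: start from the Seminal Example, iterate Proposition~\ref{prop:asymmetry} until either a stabilization is detected or no nontrivial peripheral pair remains, and in the latter case use $\rho_{\pm}\equiv 0$ to see that $V_0, W_0$ stabilize the summand containing $x_0$. The only cosmetic difference is that you phrase the iteration as an induction on $|\oV|+|\oW|$ and re-split into the same-side/opposite-side cases in the inductive step, whereas the paper leaves that case analysis inside the proof of Proposition~\ref{prop:asymmetry} itself; this is harmless redundancy rather than a different argument.
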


\begin{proof}  Begin with the Seminal Example of a stabilizing pair of coherently numbered coordinated forests of disks. Since there are no op-arcs in this example, it clearly satisfies the hypotheses of Proposition \ref{prop:asymmetry}.  Repeatedly apply Proposition \ref{prop:asymmetry}, stopping if some iteration shows that one of $M_+ = \cVp \cup_{S_+} \cWp$ or $M_- = \cVm \cup_{S_-} \cWm$ is stabilized.  Since each application decreases the number of indices represented by disks in $\oV$ and $ \oW$, Proposition \ref{prop:asymmetry} can only be applied a finite number of times.  If the process does not stop because it detects a stabilized splitting, it must stop because there are no longer any peripheral $(i, j)$ for which $\rho_{\pm}(i, j) \neq 0$.  This implies that $\rho_{\pm} = 0$.  

In this case, consider the disks $V_0, W_0$ that define the distinguished roots.  They are both contained in $M_+$ or both in $M_-$, since their boundaries have the common intersection point $x_0$ outside of $P$ or $F$.  Say both are contained in $M_+$.  The arcs $\bdd V_0 \cap P$ are disjoint from the arcs $\bdd W_0 \cap P$ since $\rho_+ = 0$.  Hence the only intersection point in $\bdd V_0 \cap \bdd W_0$ is $x_0$.  Thus $V_0$ and $W_0$ are a stabilizing pair of disks for the splitting $\cVp \cup_{S_+} \cWp$. 
\end{proof}

 \bibliography{mybibliounique}
 \bibliographystyle{plain}

\end{document}